\theoremstyle{plain} 
\numberwithin{equation}{section}
\newtheorem{thm}[equation]{Theorem}
\newtheorem{lemma}[equation]{Lemma}
\newtheorem{cor}[equation]{Corollary}
\newtheorem{prop}[equation]{Proposition}
\newtheorem{defi}[equation]{Definition}
\theoremstyle{definition}
\newtheorem{rem}[equation]{Remark}
\newtheorem{exm}[equation]{Example}
\newtheorem{construction}[equation]{Construction}
\theoremstyle{remark}
\def\fp{{\mathfrak{p}}}
\def\fq{{\mathfrak{q}}}
\def\fm{{\mathfrak{m}}}
\def\bN{{\mathbb N}}
\def\bZ{{\mathbb Z}}
\def\dim{\operatorname{dim}\nolimits}
\def\res{\operatorname{res}\nolimits}
\def\stmod{\operatorname{{\bf stmod}}\nolimits}
\def\D{\operatorname{{\bf D}}\nolimits}
\def\Db{\operatorname{{\bf D^{f}}}\nolimits}
\def\CatC{\operatorname{{\bf C}}\nolimits}
\def\dg{\operatorname{DG}\nolimits}
\def\thick{\operatorname{Thick}\nolimits}
\def\Supp{\operatorname{Supp}\nolimits}
\def\spec{\operatorname{Spec}^{*}\!}
\def\HHH{\operatorname{H}\nolimits}
\def\Hom{\operatorname{Hom}\nolimits}
\def\RHom{\operatorname{RHom}\nolimits}
\def\Ext{\operatorname{Ext}\nolimits}
\def\ann{\operatorname{Ann}\nolimits}
\def\cone{\operatorname{cone}\nolimits}
\def\bs{\boldsymbol}
\def\col{\colon}
\def\lotimes#1{{\otimes^{\mathsf L}_{#1}\,}}
\newcommand\loc[2]{{#1}_{{\scriptscriptstyle (}#2{\scriptscriptstyle )}}}
\def\shift{{\Sigma}}
\def\xra{\xrightarrow}
\newcommand\da[2]{{\downarrow^{#1}_{#2}}}
\newcommand\ua[2]{{\uparrow^{#2}_{#1}}}
\def\xla{\xleftarrow}
\def\lra{\longrightarrow}
\def\sfh{{\mathsf h}}
\def\sfi{{\mathsf i}}
\def\sfr{{\mathsf r}}
\def\sft{{\mathsf t}}
\title[Bounded derived category of a finite group]{Thick subcategories 
of the bounded \\ derived category of a finite group}
\author[Jon F. Carlson]{Jon F. Carlson}
\address{Department of Mathematics, University of Georgia, Athens, GA 30602, USA}
\email{jfc@math.uga.edu}
\author[Srikanth B. Iyengar]{Srikanth B. Iyengar}
\address{Department of Mathematics, University of Nebraska, Lincoln, NE 68588, USA}
\email{iyengar@unl.edu}
\thanks{Research partially supported by NSF grants DMS-1001102 (JFC) and DMS-0903493 (SBI)}
\date\today
\subjclass{20J06 (primary), 20C20, 13D09, 16E45}
\begin{document}

\begin{abstract} 
A new proof of the classification for tensor ideal thick subcategories of the bounded derived category, and the stable category, of modular representations of a finite group is obtained. The arguments apply more generally to yield a classification of thick subcategories of the bounded derived category of an artinian complete intersection ring. One of the salient features of this work is that it takes no recourse to infinite constructions, unlike the previous proofs of these results.
\end{abstract}

\maketitle

\section{Introduction}
In the paper \cite{BCR}, the first author, in collaboration with Benson and Rickard, established a classification for tensor ideal thick subcategories of the 
stable category of finitely generated $kG$-modules in the 
case that $G$ is a finite group and $k$ is a field of characteristic 
$p>0$ dividing the order of $G$. This result was inspired and
influenced by an earlier classification of the thick subcategories 
of the perfect complexes over a commutative noetherian ring 
by Hopkins \cite{Hopkins:1987}. The statements are remarkably 
similar: In both cases a subcategory is determined by the support, 
suitably defined, of the objects in the subcategory. 

However, the methods in the two settings were entirely different. 
At that time it did not seem possible to adapt Hopkins' methods 
to modules over group algebras. Instead, the proofs in \cite{BCR} 
used idempotent modules and idempotent  functors developed 
by Rickard.  Idempotent modules are, in general, infinitely 
generated, hence the proof had employed constructions from 
the stable category of all $kG$-module to obtain a result that 
spoke only of finitely generated module. 

The main purpose of this paper is to show that the results in the 
two settings are directly related. To this end, we extend Hopkins' 
arguments to obtain a classification of the thick subcategories 
of perfect Differential Graded modules over suitable DG algebras. 
The result about $kG$-modules is then deduced from it by a 
series of reductions, following the paradigm developed in the 
work of the second author with Avramov, Buchweitz, and 
Miller~\cite{ABIM}, and with Benson and Krause~\cite{BIK3}.  
What is more, the arguments require \emph{no} constructions 
involving infinitely generated modules, in contrast to the proof in \cite{BCR}.  

In the reduction mentioned above, it is more natural to work 
with the bounded derived category of modules over $kG$;  
the sought after classification for the stable category is an 
easy consequence, for the latter is a quotient of the former 
by the subcategory of perfect complexes. The first step (which comes
last in the paper) is to reduce to the case of an elementary abelian $p$-group; 
this uses a theorem of the first author~\cite{C}. 
After that, If $p=2$ Koszul 
duality gives an equivalence of triangulated categories between 
the bounded derived category of $k(\bZ/2)^{r}$-modules and 
the derived category of DG modules, with finitely generated 
cohomology, over a polynomial ring $k[y_{1},\dots,y_{r}]$ with 
generators $y_{i}$ in degree one. Then one can apply 
(the extension of) Hopkins' theorem to $k[\bs y]$ to get the 
desired classification. Even in this case, we take a slightly 
longer route, that has the merit of being independent 
of the characteristic of $k$.

The starting point is the observation that the group algebra of an elementary abelian group is 
an artinian complete intersection ring. For any such ring $R$ 
there is an exact functor, constructed in \cite{ABIM}, from its 
bounded derived category to the derived category of DG modules 
with finitely generated cohomology over a polynomial ring 
$k[x_{1},\dots,x_{r}]$, with generators $x_{i}$ in degree two. 
Though not an equivalence of categories, the functor is close 
enough to it that one can relate the thick subcategories in 
the source  and in the target. Hopkins' Theorem again applies, 
and  yields a classification for thick subcategories of the 
bounded derived category of $R$. We note that such a 
result has been proved for complete intersections of any 
(Krull) dimension by Stevenson~\cite{St}, using different 
techniques involving infinite methods.

Friedlander and Pevtsova~\cite{FP} have proved a similar 
classification of thick subcategories for the stable category 
of modules over a finite group scheme, again using 
idempotent modules. We have been unable to bring to bear 
the methods of this paper in that context. The difficulty is 
that for more general finite group schemes, there is no reduction of
such a classification to unipotent abelian subgroup 
schemes, which are the analogs of the elementary abelian subgroups

\medskip

Here is an outline of the contents of this paper: Section~\ref{sec:cdga} is devoted to establishing the analogue of Hopkins' theorem for DG modules over commutative  graded rings. This uses a notion of support for DG modules, discussed in Section~\ref{sec:support}. In Section~\ref{sec:lambda} we classify the thick subcategories of the bounded derived category of DG modules over an exterior algebra, using a BGG correspondence from \cite{ABIM} and the main result of Section~\ref{sec:cdga}. The statement  on artinian complete intersections is deduced from it, in Section~\ref{sec:ci}. Section~\ref{sec:kg} is devoted to group algebras. 

\medskip

In this article, we assume that the reader is familiar with basic results on group cohomology, and take \cite{CTVZ} as our reference for this topic. We also take the mechanics of triangulated categories as given, and refer the reader to one of the standard texts such as \cite{Hap} for details. We recall only that a non-empty full subcategory of a triangulated category is \emph{thick} if it is triangulated ({\it i.e.} if two of three objects in a triangle are in the subcategory, then so is the third) and closed under  summands. If $M$ is an object in a triangulated category, the smallest thick subcategory containing it is denoted $\thick(M)$. 

\medskip

There are two distinct aspects to any classification of thick subcategories via supports. The first is the \emph{realizability problem}: Construct an object with support a prescribed closed subset of the ambient variety.  This now well-understood to be the more elementary aspect of the classification problem, and there are various solutions that apply in great generality; see, for example, \cite{AI2}.  The second, more challenging, step is to prove that if the support of an object $M$ is contained in the support $N$, then $\thick(M)\subseteq\thick(N)$. This is where properties specific to the context at hand come in play.  In this work, the focus is on this aspect of the classification problem.

\subsubsection*{Acknowledgments}
This collaboration grew out of conversations at the Morningside Center for Mathematics, Beijing, during the workshop ``Representation Theory: Cohomology and Support'' held in September 2011. It is a pleasure to thank the Morningside Center for the hospitality, and the organizers: Bangming Deng, Henning Krause, Jie Xiao, Hechun Zhang, Bin Zhu, for the invitation to the workshop. 

%
%Finally, we note that with almost no extra effort we obtain, using these methods, a new proof of the realizability theorem for support varieties. This theorem says that for any closed subset $V$ of the support of the trivial module, there is an object whose support variety is exactly $V$. In the case of the bounded derived category of modules over a  noetherian commutative ring, this result is obvious. For finitely generated modules over a finite group it was proved early in the development of the theory of support varieties by the first author~\cite{C2}.
%\section{Thick subcategories of triangulated categories}

\section{Support for DG modules}
\label{sec:support}
Let $S$ be a commutative $\bZ$-graded noetherian ring; we assume $S$ is concentrated in even degrees or $2S=0$. In this article, are gradings are upper; thus $S=\oplus_{i\in\bZ}S^{i}$. In this section we introduce a notion of support for Differential Graded (henceforth abbreviated to DG) modules over $S$ and describe various ways of computing it. 

To begin with, we record a version of Nakayama's Lemma for graded $S$-modules. Such results are known in greater generality  but the version below is all we need; the proof is similar to the one for ungraded rings, and is sketched for completeness.
 
\begin{lemma}
\label{lem:nak}
Assume $S$ has a unique maximal homogeneous ideal, say $\fm$. If $M$ is a finitely generated graded $S$-module and
$\fm M = M$, then $M=0$.
\end{lemma}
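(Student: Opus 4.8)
The plan is to imitate the classical proof of Nakayama's Lemma via a minimal generating set, being careful that all choices can be made homogeneous; the one point where the hypothesis on $\fm$ is really used is an elementary "unit lemma''. First I would reduce to showing that $M\neq 0$ implies $\fm M\neq M$. Since $M$ is finitely generated and graded, it has a finite generating set consisting of homogeneous elements (split any finite generating set into homogeneous components). So choose homogeneous generators $m_{1},\dots,m_{n}$, with $m_{i}$ of degree $d_{i}$, with $n$ minimal, and assume $M\neq 0$, so that $n\geq 1$.

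Next, assuming $\fm M=M$, I would express the last generator as a homogeneous $\fm$-linear combination of the $m_{i}$. Writing $m_{n}=\sum_{j}b_{j}x_{j}$ with $b_{j}\in\fm$ and $x_{j}\in M$, and then expanding each $x_{j}$ in the generators $m_{1},\dots,m_{n}$, one obtains $m_{n}=\sum_{i}a_{i}m_{i}$ with each $a_{i}\in\fm$. Extracting the degree-$d_{n}$ homogeneous component of both sides — legitimate precisely because $\fm$ is a homogeneous ideal, so the homogeneous components of the $a_{i}$ still lie in $\fm$ — we may assume $a_{i}$ is homogeneous of degree $d_{n}-d_{i}$; in particular $a_{n}$ is homogeneous of degree $0$. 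Rearranging gives $(1-a_{n})m_{n}=\sum_{i=1}^{n-1}a_{i}m_{i}$.

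The crux is then to show $1-a_{n}$ is a unit of $S$. Since $a_{n}\in\fm$, we cannot have $1-a_{n}\in\fm$, for that would force $1\in\fm$. Now $1-a_{n}$ is homogeneous, hence the ideal $S(1-a_{n})$ is homogeneous; were it proper, it would be contained in some maximal homogeneous ideal (such exist by the ascending chain condition on the homogeneous ideals of the noetherian ring $S$, or by the usual Zorn argument applied to proper homogeneous ideals — their union along a chain is again a proper homogeneous ideal), and by hypothesis that ideal is $\fm$, contradicting $1-a_{n}\notin\fm$. Thus $S(1-a_{n})=S$, so $1-a_{n}$ is invertible. Consequently $m_{n}\in\sum_{i<n}Sm_{i}$, so $m_{1},\dots,m_{n-1}$ already generate $M$: this contradicts minimality of $n$ when $n\geq 2$, and when $n=1$ it yields $m_{1}=0$, hence $M=Sm_{1}=0$, again a contradiction. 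Either way $M=0$.

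The only step of genuine substance is the unit lemma in the previous paragraph — this is exactly where the assumption that $\fm$ is the \emph{unique} maximal homogeneous ideal is consumed; the rest is a routine homogenization of the standard argument. One could alternatively run the determinant (Cayley--Hamilton) trick instead, but that forces one to keep track of matrices over $S$ whose entries have prescribed degrees and still ultimately invokes the same unit lemma, so the minimal-generators route seems the cleanest.
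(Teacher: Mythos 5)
Your proof is correct, and it runs parallel to, but is not identical with, the argument in the paper. You homogenize the ``minimal generating set'' version of Nakayama's Lemma: choose a minimal homogeneous generating set, show the last generator is redundant, and derive a contradiction. The paper instead homogenizes the determinant (Cayley--Hamilton) version: write $x_i=\sum_j s_{ij}x_j$ with $s_{ij}\in\fm$ homogeneous, apply $\det(I-A)M=0$, and reduce, by extracting the appropriate homogeneous component, to the equation $x=s_0x$ with $s_0\in\fm\cap S^0$. Both proofs bottom out in the same ``unit lemma'': a degree-zero element of the form $1-s_0$ with $s_0\in\fm$ is invertible. You actually prove this (a proper homogeneous principal ideal sits inside a maximal homogeneous ideal, which by hypothesis is $\fm$; since $1-s_0\notin\fm$ the ideal $S(1-s_0)$ is the unit ideal), whereas the paper asserts without proof that $S^0$ is local with maximal ideal $\fm\cap S^0$, a fact which is justified by exactly the argument you supply. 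So your route is slightly more self-contained; what the determinant trick buys is that one avoids the small case analysis at $n=1$ versus $n\geq 2$ and the appeal to minimality, at the cost of bookkeeping with a matrix whose $(i,j)$-entry must be kept homogeneous of degree $d_i-d_j$. Your closing remark is accurate: the two arguments are the standard pair of proofs of Nakayama, each adapted to the graded setting in the same way.
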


\begin{proof}
Pick homogeneous elements $\{x_{1},\dots,x_{n}\}$ that generate $M$ as an $S$-module. By hypothesis, there exist homogeneous elements $\{s_{ij}\}_{1\leqslant i,j\leqslant n}$ in $\fm$ such that
\[
x_{i} = \sum_{j=1}^{n}s_{ij}x_{j} \quad \text{for each $1\leq i\leq n$.} 
\]
By the determinant trick, it follows that $\det(I -A)M=0$ where $A=(s_{ij})$. Observe that $\det(I-A)= 1 - \sum_{t\in\bZ}s_{t}$, where $s_{t}\in \fm_{t}$. Thus, for any homogeneous element $x$ in $M$ one gets $x = \sum_{t}s_{t}x$, so that $x=s_{0}x$. As $S_{0}$ is a local ring with maximal ideal $\fm\cap S_{0}$, the element $1-s_{0}$, which is in $S_{0}\setminus \fm$, is a unit, and hence $x=0$.
\end{proof}

Henceforth we view $S$ as a DG algebra with $d^{S}=0$. Given a DG $S$-module $M$, its homology $\HHH(M)$ is  a graded $S$-module. We write $\D(S)$ for the derived category of DG $S$-modules, viewed as a triangulated category, and $\Db(S)$ for the full subcategory consisting of DG modules $M$ for which the $S$-module $\HHH(M)$ is finitely generated. The basic constructions and results on DG modules over DG algebra required in this work are all recorded in \cite[\S3 and \S4]{ABIM}. 

We recall that a DG module $S$-module $F$ is \emph{semi-free} if it admits a family $(F_{n})_{n\in\bN}$ of DG submodules such that $F_{n}\subseteq F_{n+1}$,  and $\cup_{n}F_{n}=F$, and $F_{n+1}/F_{n}$ is isomorphic to a direct sum of suspensions of $S$; in particular, forgetting differentials, $F$ is a graded free $S$-module. When $F$ is semi-free the functors $\Hom_{S}(F,-)$ and $-\otimes_{S}F$ defined on the category of DG $S$-modules preserve quasi-isomorphisms.

Each DG module $M$ over $S$ admits a \emph{semi-free resolution}: a quasi-isomorphism of DG $S$-modules $F\to M$ with $F$ semi-free. Then assignments
\[
(-\lotimes{S}M) = -\otimes_{S}F \quad\text{and}\quad \RHom_{S}(M,-) = \Hom_{S}(F,-)
\]
then define exact functors on $\D(S)$.

\medskip

We write $\spec S$ for the set of \emph{homogeneous} prime ideals in $S$. For each $\fp$ in $\spec S$, the homogeneous localization of a graded $S$-module $M$ at $\fp$ is denoted $\loc M{\fp}$. We write $k(\fp)$ for $\loc S{\fp}/\fp\loc S{\fp}$, which is a graded field. See Bruns and Herzog~\cite[\S1.5]{Bruns/Herzog:1998} for the module theory over graded rings. 

When $M$ is a DG $S$-module, and $\fp\in\spec S$, the differential on $M$ induces one on $\loc M{\fp}$, making it a DG module over $\loc S{\fp}$.

\begin{defi}
The support of $M\in\Db(S)$ is the subset of $\spec S$ defined by
\[
\Supp^{*}_{S}M = \{\fp \in \spec S\mid \HHH(\loc M{\fp})\ne 0\}
\]
\end{defi}
Let $\ann_{S}\HHH(M)$ denote the \emph{annihilator} of the $S$-module $\HHH(M)$; it is a homogeneous ideal of $S$.
Since $\HHH(\loc M{\fp})\cong \loc{\HHH(M)}{\fp}$, there are equalities
\begin{equation}
\label{eq:suppH}
\Supp^{*}_{S}M =\Supp^{*}_{S}\,\HHH(M) = \{\fp\in\spec S\mid \fp\supseteq \ann_{S}\HHH(M)\}\,. 
\end{equation}
The next result provides a description of support that is often more convenient to work with. Its proof  is, at the end, an application of Nakayama's Lemma~\ref{lem:nak}.

\begin{thm}
\label{thm:support}
Let $M$ be a DG $S$-module in $\Db(S)$. For each $\fp\in\spec S$ one has
\[
\HHH(\loc M{\fp})\ne 0 \quad \text{if and only if}\quad \HHH(k(\fp)\lotimes SM)\ne 0\,.
\]
In particular $\Supp^{*}_{S}M =\{\fp\in\spec S\mid k(\fp)\lotimes SM \ne 0 \text{ in } \D(S)\}$.
\end{thm}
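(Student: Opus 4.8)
The plan is to localize at $\fp$ to reduce to a graded local ring, and then to use a Koszul complex to bridge between $M$ and the residue field. First I would localize: since homogeneous localization at $\fp$ is flat, $\loc S\fp\lotimes S M\simeq\loc M\fp$ and hence $k(\fp)\lotimes S M\simeq k(\fp)\lotimes{\loc S\fp}\loc M\fp$, while $\HHH(\loc M\fp)=\loc{\HHH(M)}\fp$ remains finitely generated over $\loc S\fp$. Replacing $S$ by $\loc S\fp$, it is therefore enough to prove: if $S$ is graded local with maximal homogeneous ideal $\fm$ and residue field $k=S/\fm$, and $M\in\Db(S)$, then $\HHH(M)\ne 0$ if and only if $\HHH(k\lotimes S M)\ne 0$. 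One implication is immediate, since $\HHH(M)=0$ forces $M\simeq 0$ in $\D(S)$, and then $k\lotimes S M\simeq 0$. (I would deliberately avoid minimal semifree resolutions here, since $\loc S\fp$ need not be non-negatively graded; the Koszul argument below is indifferent to this.)

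For the converse, assume $\HHH(M)\ne 0$ and choose homogeneous generators $x_{1},\dots,x_{c}$ of $\fm$, which exist as $S$ is noetherian. Let $K=K(x_{1})\otimes_{S}\cdots\otimes_{S}K(x_{c})$ be the Koszul complex, where $K(x)=(S\xra{x}S)$; it is a bounded semifree DG $S$-module. \emph{Step 1: $\HHH(K\lotimes S M)\ne 0$.} Because each $K(x_{i})$ is semifree, $K\lotimes S M$ is obtained from $M$ by applying $c$ times the operation $X\mapsto K(x_{i})\lotimes S X$, which sits in a triangle relating $X$, a degree shift of $X$ via multiplication by $x_{i}$, and $K(x_{i})\lotimes S X$. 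Its homology long exact sequence gives at each stage a short exact sequence of graded homology modules
\[
0\longrightarrow \HHH(X)/x_{i}\HHH(X)\longrightarrow \HHH(K(x_{i})\lotimes S X)\longrightarrow \bigl(0:_{\HHH(X)}x_{i}\bigr)\longrightarrow 0
\]
(up to a degree shift on the last term), and noetherianity keeps the homology finitely generated throughout. Since $x_{i}\in\fm$, Lemma~\ref{lem:nak} shows that $\HHH(X)\ne 0$ implies $\HHH(X)/x_{i}\HHH(X)\ne 0$; iterating from $\HHH(M)\ne 0$ yields $\HHH(K\lotimes S M)\ne 0$. This is the step that invokes Nakayama, and it carries the real computational weight.

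\emph{Step 2: $K\in\thick(k)$.} Each $x_{i}$ is null-homotopic on $K(x_{i})$, hence acts as $0$ on $\HHH(K)$, so every $\HHH^{i}(K)$ is a finitely generated module over the graded field $S/\fm=k$, and therefore a finite direct sum of shifts of $k$. As $K$ is a bounded complex, its truncation triangles place $K$ in the thick subcategory generated by its homology modules, so $K\in\thick_{\D(S)}(k)$. Applying the exact functor $-\lotimes S M$ gives $K\lotimes S M\in\thick_{\D(S)}(k\lotimes S M)$; hence if $k\lotimes S M\simeq 0$ then $K\lotimes S M\simeq 0$, contradicting Step 1. Thus $\HHH(k\lotimes S M)\ne 0$, which completes the equivalence, and the ``in particular'' follows because an object of $\D(S)$ vanishes exactly when its homology does. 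The point I expect to be the main obstacle is the observation underlying Step 2 --- that the Koszul complex, although a bounded complex of \emph{free} modules, also lies in $\thick(k)$ --- since this is precisely what allows the conclusion about $k\lotimes S M$ to be transported from the easily controlled object $K\lotimes S M$.
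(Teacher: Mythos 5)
Your argument is correct and is essentially the paper's proof run in the contrapositive direction: both reduce by localization to the graded local case, form the Koszul DG $S$-module $K$ on a generating set of $\fm$, apply Lemma~\ref{lem:nak} at each stage of the Koszul tower, and exploit the fact that $K\in\thick_S(k)$. Where the paper starts from $k\lotimes{S}M\simeq 0$, passes to $K\otimes_S M\simeq 0$, and descends the tower via Nakayama to conclude $\HHH(M)=0$, you ascend from $\HHH(M)\ne 0$ to $\HHH(K\lotimes SM)\ne 0$ and derive a contradiction; the one point worth tightening is your ``truncation triangle'' justification of $K\in\thick_S(k)$, since soft truncations of a DG module over a graded ring with elements in positive degree are not $S$-submodules, so the familiar argument for bounded complexes of modules does not transfer verbatim --- the paper instead cites the penultimate paragraph of the proof of \cite[Theorem~8.1]{BIK2} for this fact.
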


\begin{proof}
Replacing $S$ by $\loc S{\fp}$ one may assume $S$ is (graded) local, so that there is a unique  maximal homogeneous ideal; call it $\fm$ and let $k=S/\fm$. The task is then to verify that, in $\D(S)$, if $k\lotimes SM=0$, then $M=0$. Pick a set of elements $\{s_{1},\dots,s_{n}\}$ that generate the ideal $\fm$ and let $K$ be the DG $S$-module obtained by an iterated mapping cone: $K_{0}=S$ and $K_{i}=\cone(\shift^{|s_{i}|} K_{i-1}\xra{\ s_{i}\ }K_{i-1})$ for $i\geq 1$ and $K=K_{n}$. Then $K$ is in $\thick_{S}(k)$; see the penultimate paragraph in the proof of \cite[Theorem 8.1]{BIK2}. The hypothesis thus implies that $K\otimes_{S}M=0$ in $\D(S)$. Consider the long exact sequence associated to the exact sequence of DG modules
\[
0\lra K_{i-1}\otimes_{S}M \lra K_{i}\otimes_{S}M \lra \shift^{|s_{i}|+1}(K_{i-1}\otimes_{S}M) \lra  0
\]
Assuming $\HHH(K_{i}\otimes M)=0$ there is an isomorphism 
\[
\shift^{|s_{i}|}\HHH(K_{i-1}\otimes_{S}M)\xra{\ s_{i}\ }\HHH(K_{i-1}\otimes_{S}M)
\]
of finitely generated graded $S$-modules. Hence  $\HHH(K_{i-1}\otimes_{S}M)=0$, by Lemma~\ref{lem:nak}, for $s_{i}$ is in $\fm$. An iteration then yields $\HHH(M)=0$, as desired.
\end{proof}

\section{Perfect DG modules over commutative graded algebras}
\label{sec:cdga}
Let $S$ be a commutative $\bZ$-graded noetherian ring, either concentrated in even degrees or satisfying $2S=0$.
As in the previous section, we view $S$ as a DG algebra with differential zero. The main result of this section is a generalization of Hopkins' theorem that applies to perfect DG modules over $S$. 

\begin{defi}
A DG $S$-module is said to be \emph{perfect} if it is in $\thick_{S}(S)$.
\end{defi}

A DG module is perfect if and only if it is a retract (that is to say, isomorphic in $\Db(S)$ to a direct summand of) a semi-free DG module with a finite free filtration; see \cite[Theorem~4.2]{ABIM} for details. We do not need this description in what follows.

When $S$ is concentrated in degree zero, a DG module is nothing other than a complex of $S$-modules, and a DG module is perfect if and only if it is isomorphic to a bounded complex of finitely generated projective $S$-modules. Thus the result below generalizes Hopkins' theorem~\cite[Theorem 11]{Hopkins:1987}, see also Neeman~\cite[Lemma 1.2]{Neeman:1992} and Thomason~\cite[Theorem 3.14]{Thomason:1997}, from rings (which are DG algebras concentrated in degree $0$) to DG algebras with zero differential. 

\begin{thm}
\label{thm:hopkins-dga}
If $M$ and $N$ are perfect DG $S$-modules with $\Supp^{*}_{S}M\subseteq \Supp^{*}_{S}N$, then $M$ is in $\thick_{S}(N)$.
\end{thm}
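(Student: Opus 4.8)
The plan is to mimic Hopkins' original argument, adapted to the graded DG setting, using the description of support provided by Theorem~\ref{thm:support}. The essential point is a ``Koszul object'' technique: given a homogeneous ideal $\Fa=(s_1,\dots,s_n)$ of $S$, form the iterated mapping cone $K=K(\bs s)$ exactly as in the proof of Theorem~\ref{thm:support}, so that $K$ lies in $\thick_S(S)$ and the homology of $K\lotimes S M$ is annihilated by $\Fa$. The key observation (Hopkins' lemma, graded DG version) will be: if $M$ is perfect and $\Fa^m\cdot\HHH(M)=0$ for some $m$, then $M$ lies in $\thick_S(K)$. I would prove this by induction on the number of generators $n$ of $\Fa$, peeling off one generator $s=s_n$ at a time: on the subquotient where $s$ acts invertibly one localizes and uses that $\HHH(M)[s^{-1}]$ is finitely generated over $S[s^{-1}]$ and annihilated by a power of $(s_1,\dots,s_{n-1})$ after inverting $s$, while the $s$-torsion part is handled by the cone of multiplication by a power of $s$. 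Perfectness is preserved under all these operations because $\thick_S(S)$ is closed under cones, suspensions, retracts, and (crucially) because for a perfect $M$ a power of any element annihilating $\HHH(M)$ — no, rather, because $M$ being perfect forces $\HHH(M)$ to be bounded and finitely generated, so cones of multiplication maps on $M$ stay perfect.

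With that lemma in hand, the theorem follows formally. Set $\Fa=\ann_S\HHH(N)$. By \eqref{eq:suppH} the hypothesis $\Supp^*_S M\subseteq\Supp^*_S N$ says $V(\ann_S\HHH(M))\subseteq V(\Fa)$, i.e. $\Fa\subseteq\sqrt{\ann_S\HHH(M)}$; since $\HHH(M)$ is a finitely generated module over the noetherian ring $S$, its annihilator contains a power of its radical, so $\Fa^m\cdot\HHH(M)=0$ for some $m\geqslant 1$. By the lemma, $M\in\thick_S(K)$ where $K=K(\bs s)$ is the Koszul object on a finite generating set $\bs s$ of $\Fa$. It therefore suffices to show $K\in\thick_S(N)$. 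For this one runs the dual of the same argument: $\HHH(K)$ is annihilated by $\Fa=\ann_S\HHH(N)$, and one shows — again by induction on generators and Nakayama's Lemma~\ref{lem:nak} applied after homogeneous localization — that any object of $\Db(S)$ whose homology is finitely generated and annihilated by $\ann_S\HHH(N)$ lies in $\thick_S(N)$, provided it is perfect (or more simply: $K$ is built from copies of $S$, and one can express $S/\Fa^{\text{large}}$-modules through $N$). The cleanest route is a single self-contained lemma: \emph{if $L\in\Db(S)$ is perfect and $\Supp^*_S L\subseteq\Supp^*_S N$ with $\HHH(N)$ generated in bounded degrees, then $L\in\thick_S(N)$}, proved by Noetherian induction on $\Supp^*_S N$ together with the Koszul-object reduction; the theorem is the case $L=M$.

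The main obstacle, I expect, is the inductive step that controls the interaction between \emph{perfectness} and the localization/torsion decomposition. In Hopkins' ungraded setting one freely uses that a perfect complex localizes to a perfect complex and that inverting an element is an exact localization of triangulated categories; here the same holds for homogeneous localization at a homogeneous element, but one must be careful that all constructions respect the internal grading and that ``finitely generated homology'' is preserved — this is where the standing hypothesis that $S$ is noetherian and concentrated in even degrees (or has $2S=0$) is used, and where Lemma~\ref{lem:nak} substitutes for ordinary Nakayama. A secondary subtlety is that $\thick_S(N)$ need not contain $S$, so one cannot simply reduce everything to Koszul objects and stop; one genuinely needs the second half, showing $K\in\thick_S(N)$, which forces the argument to be symmetric in $M$ and $N$ at the level of ``homology annihilated by a power of a fixed ideal.'' Once the bookkeeping with gradings is set up correctly, each individual step is a routine triangulated-category manipulation, and the whole proof is essentially Hopkins' with DG/graded decorations.
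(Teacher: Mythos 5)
Your reduction to a Koszul object is fine as far as it goes: if $\Fa^m\HHH(M)=0$ with $\Fa=(\bs s)$, then since $\HHH(M)$ is bounded some power of each $s_i$ is zero as an endomorphism of $M$ in $\D(S)$, so $M$ is a retract of $M\lotimes{S}K(\bs s')$ for suitable powers $\bs s'$, and because $M$ is perfect this lands $M$ in $\thick_S(K(\bs s))$. That step is not the difficulty. The genuine gap is the second half, $K(\bs s)\in\thick_S(N)$ (or your strengthened form: any perfect $L$ with $\ann_S\HHH(N)\cdot\HHH(L)=0$ lies in $\thick_S(N)$). This is \emph{not} dual to the Koszul-object lemma and is not a routine manipulation; it is, in disguise, the whole content of the theorem. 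Already for $S=k[x,y]$, $N=K(xy)$, $L=K(x)$ you have $\ann\HHH(N)=(xy)\subseteq(x)=\ann\HHH(L)$, yet there is no extension, octahedron, or localization trick that produces $K(x)$ from $K(xy)$ — the octahedron for $xy=y\circ x$ builds $K(xy)$ \emph{out of} $K(x)$ and $K(y)$, which goes the wrong way. ``Noetherian induction on $\Supp^*_S N$ together with Nakayama'' does not close this: the induction hypothesis concerns smaller closed sets, but $K(\bs s)$ and $N$ have the \emph{same} support, so you never descend.

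What actually carries the proof in the paper is a graded-DG Tensor Nilpotence Theorem (Theorem~\ref{thm:tnt}): a morphism $f$ of perfect DG modules that dies after tensoring with every residue field $k(\fp)$ in $\Supp^*_S G$ has some tensor power $f^{\otimes n}$ killed by $G$. This is proved by Hopkins--Neeman--Thomason-style reductions (to $G=S$, to $F=S$, to graded domains, by induction on $\dim S$, plus a noetherian finishing step using the chain $\ann(f)\subseteq\ann(f^{\otimes2})\subseteq\cdots$). Theorem~\ref{thm:hopkins-dga} is then deduced by a formal retract argument applied to the fiber $f\colon F\to M$ of the unit map $M\to M\lotimes{S}\RHom_S(N,N)$: nilpotence of $f$ tensored with $M$ shows $M^{\otimes(n+1)}$ is a retract of $M\lotimes{S}\cone(f^{\otimes n})\in\thick_S(N)$, and perfectness of $M$ then recovers $M$ itself from $M^{\otimes2}$ via $\RHom_S(M,S)$. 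Your proposal never mentions $\RHom_S(N,N)$, never isolates a nilpotence statement, and explicitly expects the core to be ``routine''; that is precisely where the argument is missing a theorem, and until you supply a nilpotence input (or an equivalent), the proof does not go through.
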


This result can be deduced from the classification of the localizing subcategories of $\D(S)$ proved in \cite[\S8]{BIK2}. In the interest of avoiding  ``infinite constructions'', we sketch an alternative proof, mimicking arguments from \cite{Hopkins:1987,Neeman:1992,Thomason:1997}. The crux is the following \lq Tensor Nilpotence Theorem\rq\ for perfect DG modules that extends \cite[Theorem 10]{Hopkins:1987} and \cite[Theorem 1.1]{Neeman:1992}, see also \cite[Theorems 3.6,3.8]{Thomason:1997}.

\begin{thm}
\label{thm:tnt}
Let $F$ and $G$ be perfect DG $S$-modules. If $f\col F\to X$ is a morphism of DG $S$-modules and $k(\fp)\lotimes Sf=0$ for each $\fp\in\Supp^{*}_{S}G$, then there exists an integer $n\ge 1$ such that $G\lotimes Sf^{\otimes n}=0$ in $\D(S)$.
\end{thm}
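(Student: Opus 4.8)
The plan is to mimic the proof of the Tensor Nilpotence Theorem in \cite{Neeman:1992} and \cite{Thomason:1997}, adapting it to the DG setting. The essential point is that the conclusion can be checked after localizing at each prime, and then — since $S$ is noetherian — a single $n$ that works on a suitable affine cover can be patched together. So the argument will have two distinct parts: a \emph{local} analysis showing that for each $\fp$ the morphism $G\lotimes Sf^{\otimes n}$ dies after localizing at $\fp$ for $n=n(\fp)$ large, and a \emph{globalization} step using noetherian induction (or a ``generic flatness''-style spreading-out) to obtain a uniform $n$.

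\emph{Local step.} Fix $\fp\in\spec S$. If $\fp\notin\Supp^*_S G$ then $\loc{(G\lotimes Sf^{\otimes n})}{\fp}$ factors through $\loc G{\fp}$, whose homology vanishes by Theorem~\ref{thm:support} (indeed $k(\fp)\lotimes SG=0$), so already $n=1$ works locally. If $\fp\in\Supp^*_S G$, the hypothesis gives $k(\fp)\lotimes Sf=0$. Replacing $S$ by $\loc S{\fp}$ I may assume $S$ is graded local with maximal ideal $\fm=\fp\loc S{\fp}$ and residue field $k=k(\fp)$. I would then argue that if $k\lotimes Sf=0$ in $\D(S)$, then $G\lotimes Sf^{\otimes n}=0$ in $\D(S)$ for some $n\ge1$. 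The mechanism is the same Koszul-object trick used in the proof of Theorem~\ref{thm:support}: pick generators $\{s_{1},\dots,s_{r}\}$ of $\fm$, form the iterated mapping cone $K$ on multiplication by the $s_{i}$, so that $K\in\thick_S(k)$ and $\HHH(K)$ is a finite-length $S$-module, with $S/\fm\hookrightarrow K\twoheadrightarrow$ (shifts of) $S/\fm$ up to filtration. Since $K\in\thick_S(k)$ and $k\lotimes Sf=0$, also $K\lotimes Sf=0$. On the other hand, because $G$ is perfect and $\HHH(G)$ is finitely generated, one shows (by induction on the length of a finite free filtration of $G$, or by filtering $\HHH(G)$) that $G$ lies in the thick subcategory generated by $S$ using only finitely many steps, and that some power $\fm^{N}$ annihilates the relevant \lq\lq obstruction\rq\rq; concretely, $G\lotimes Sf^{\otimes r}$ becomes a morphism all of whose components land in modules killed by $\fm$, hence factors through $K$-type objects, and therefore another $\otimes f$ kills it. I expect the clean formulation to be: there is $n$ (depending only on the free rank of $G$ and the number of generators of $\fm$) with $G\lotimes Sf^{\otimes n}=0$ in $\D(\loc S{\fp})$.

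\emph{Globalization.} Once $G\lotimes Sf^{\otimes n}$ vanishes in $\D(\loc S{\fp})$, I claim it already vanishes in $\D(\loc S{g})$ for some homogeneous $g\notin\fp$. This is the standard spreading-out argument: a morphism in $\D(S)$ between perfect DG modules, or more precisely the composite $G\lotimes Sf^{\otimes n}$ followed by a projection to a perfect DG module, represents a class in a finitely generated $S$-module (a homology group of an $\RHom$ complex between perfect DG modules, hence finitely generated since $S$ is noetherian); vanishing after localization at $\fp$ of a finitely generated module means vanishing after inverting a single element outside $\fp$. The homogeneous prime spectrum $\spec S$ is quasi-compact, so finitely many such basic opens $\{D(g_{i})\}$ cover it, each with its own exponent $n_{i}$; taking $n=\max_i n_i$ and using that a DG module which is locally zero on a cover is zero (again Theorem~\ref{thm:support}, applied prime by prime), we get $G\lotimes Sf^{\otimes n}=0$ in $\D(S)$.

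\emph{Main obstacle.} The delicate point is the local step: making precise how largeness of $n$ is controlled, i.e. turning ``$k\lotimes Sf=0$'' together with perfectness of $G$ into an explicit finite bound. In the classical (non-DG) Neeman–Thomason argument one uses that a perfect complex over a local ring, reduced mod $\fm$, is a bounded complex of finite free modules, and one runs an induction on the rank to peel off one factor of $f$ per Koszul layer; the subtlety here is bookkeeping the DG structure (the filtration $F_n\subseteq F_{n+1}$ of a semi-free resolution need not be finite a priori, but perfectness gives a \emph{finite} free filtration by \cite[Theorem~4.2]{ABIM}, which is what one must invoke). A secondary technical nuisance is verifying that the relevant $\RHom$ homology groups used in the spreading-out step are genuinely finitely generated $S$-modules in the graded/DG setting — this follows because both arguments of $\RHom_S(-,-)$ are in $\Db(S)$ and $S$ is graded noetherian, but it should be stated carefully.
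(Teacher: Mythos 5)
Your proposal has a genuine gap, and one of the few concrete steps you write down is actually false. In the local step you assert: ``Since $K\in\thick_S(k)$ and $k\lotimes Sf=0$, also $K\lotimes Sf=0$.'' This does not follow. The class of objects $W$ in $\D(S)$ for which the morphism $W\lotimes Sf\colon W\lotimes SF\to W\lotimes SX$ vanishes is closed under shifts, sums, and retracts, but it is \emph{not} closed under mapping cones: from a triangle $W_1\to W_2\to W_3\to$ with $W_1\lotimes Sf=0=W_2\lotimes Sf$ one cannot conclude $W_3\lotimes Sf=0$. This failure of ``vanishing of a morphism'' to propagate along exact triangles is precisely the obstruction that makes tensor nilpotence theorems nontrivial, and it is why the paper's proof needs the delicate Claim~1 (if $(S/I)\otimes_Sf=0=(S/J)\otimes_Sf$ and $IJ=(0)$, then $f\otimes f=0$) followed by a genuine induction on $\dim S$, rather than a membership-in-$\thick(k)$ argument.

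Beyond that specific error, your local-to-global scheme does not actually reduce the difficulty. After localizing at $\fp$, the ring $\loc S{\fp}$ still has positive Krull dimension in general, so ``work locally'' does not put you in a situation where the claim is evident --- you would need essentially the same induction on dimension that the paper runs (reduce to domains via nilradical and Claim~1; kill a nonzerodivisor $s\in\ann(f)$ and apply the hypothesis to $S/(s)$; conclude $f^{\otimes(n+1)}=0$ via the computation $f^{\otimes n}(1)\sim sx$, $x$ a cycle, and $sx\otimes f(1)=x\otimes sf(1)\sim 0$). Your ``globalization'' step is morally close to the paper's \emph{final} step (the noetherian ascending chain on $\ann(f^{\otimes n})$, and localizing at a prime containing the stable annihilator to reduce to the finite-dimensional case); but in the paper that step is deployed only once, to pass from infinite to finite dimension, after the induction on dimension has already been established. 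Used as a substitute for that induction it is circular, because the local input you feed it has not been proved. The spreading-out/finite-generation observations you make are correct in themselves, but they do not close the gap.
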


Here $f^{\otimes n}$ denotes the morphism $X^{\lotimes{S}\!n}\to Y^{\lotimes{S}\!n}$ induced by $f$.

\begin{proof} The proof proceeds by a series of reductions. 

\medskip

\emph{Step }1. Reduction to the case $G=S$.

\medskip

This is a formal argument, see \cite[Theorem 3.8]{Thomason:1997}, exploiting the fact that, since the DG $S$-module $G$ is perfect, it is a retract of $\RHom_{S}(G,G)\lotimes{S}G$.

\medskip

\emph{Step }2. Reduction to the case $F=S$.

\medskip

Indeed, $f^{\otimes n}=0$ if and only if $(f')^{\otimes n}=0$ where $f'\col S\to \RHom_{S}(F,X)$ is the morphism that assigns $1$ to $f$.

\medskip

For the sequel, we replace $X$ by its semi-free resolution; thus, for example, $W\lotimes Sf$ is represented by $W\otimes_{S}f$ for any $W\in\D(S)$ and $f^{\otimes n}\col S\to X^{\lotimes{S}n}$ is represented by the morphism $f^{\otimes n}\col S\to X^{\otimes n}$. In particular, the following conditions  are equivalent.
\begin{itemize}
\item $f^{\otimes n}=0$ in $\D(S)$;
\item the cycle $f(1)^{\otimes n}\in X^{\otimes n}$ is homologous to $0$.
\end{itemize} 

Note that morphisms $g,h\col S\to W$ are homotopic if and only if the cycles $g(1)$ and $h(1)$ in $W_{0}$ are homologous; in such a situation, we write $g(1)\sim h(1)$.

\medskip

\emph{Claim }1. Let $I,J$ be homogeneous ideals in $S$ such $(S/I)\otimes_{S}f=0=(S/J)\otimes_{S}f$.
If $IJ=(0)$, then $f\otimes_{S}f=0$.

\medskip

Indeed, the hypothesis is equivalent to the existence of cycles $a\in IX_{0}$ and $b\in JX_{0}$ such that $f(1)\sim a$ and $f(1)\sim b$. It is then readily seen that
\[
(f\otimes f)(1) = f(1) \otimes f(1) \sim a \otimes b\,.
\]
Since $a\otimes b \in IJ(X_{0}\otimes_{S}X_{0})=0$, it follows that $f\otimes f=0$.

\medskip

In what follows,  $\dim S$ denotes the dimension of $\spec S$.

\medskip

\emph{Claim }2. If $d$ is a non-negative integer such that the result holds for all graded domains $S$ with $\dim S\leq d$, then it holds for all graded rings $S$ with $\dim S\leq d$.

\medskip

Indeed, this is by a straightforward application of Claim 1, given  that the minimal primes of $S$, and so also its nil-radical, are homogeneous; see \cite[Lemma~1.5.6]{Bruns/Herzog:1998}.

\smallskip

Next we prove the desired result when $\dim S$ is finite, by an induction argument.

\medskip

Assume $\dim S=0$. Using the preceding claim we can assume $S$ is a domain, and hence a graded field. In this case the result is obvious.

We now suppose that the result holds for all commutative $\bZ$-graded noetherian rings of dimension $d-1$ or less, and verify that it holds whenever $\dim S=d\geq 1$; by Claim 2, it suffices to consider the case when $S$ is a domain. 

Consider the ideal $\ann(f) = \{s\in S\mid sf=0 \text{ in } \D(S)\}$ of $S$. By hypothesis, $k(0)\otimes_{S}f=0$, where $k(0)$ is the graded residue field at the prime ideal $(0)$. Thus, $\ann(f)\ne (0)$. We may assume $f\ne 0$, so there exists a non-zero homogeneous element $s\in \ann(f)$ that is not a unit, and hence a non-zero divisor in $S$. 

Consider the morphism $S/(s)\otimes_{S} f\col S/(s)\to S/(s)\otimes_{S} X$. Since $\spec (S/(s))$ is naturally identified with the subset of primes $\fp\in\spec S$ containing $s$, and for such a $\fp$ the $S$ action on $k(\fp)$ factors through $S/(s)$, it follows that 
\[
k(\fp)\otimes_{S/(s)} (S/(s)\otimes_{S} f) \cong k(\fp)\otimes_{S} f =0\,.
\]
The induction hypothesis thus yields that $S/(s)\otimes_{S}f^{\otimes n}=0$ for some integer $n\geq 1$; that is to say, $f^{\otimes n}(1)\sim sx$ for some $x\in X$. Because $sd(x) = d(sx)=0$ and the graded $S$-module underlying $X$ is free (recall $X$ is semi-free) one gets $d(x)=0$, that is to say, $x$ is a cycle. This justifies the second equivalence below:
\[
f^{\otimes(n+1)}(1)= (f^{\otimes n}\otimes f)(1) \sim (sx \otimes f(1)) = (x \otimes sf(1)) \sim (x\otimes 0) = 0\,.
\]
The other equivalence and the equalities are standard. Thus  $f^{\otimes(n+1)}=0$.

This completes the proof of the theorem when $\dim S$ is finite.

\medskip

To finish the proof, we consider the chain of ideals 
\[
\ann(f)\subseteq \ann(f^{\otimes 2})\subseteq \ann(f^{\otimes 3})\cdots\,.
\]
Since $S$ is noetherian, there exists an integer $n$ such that $\ann(f^{\otimes n})=\ann(f^{\otimes i})$ for all $i\geq n$. We claim that $f^{\otimes n}=0$, or equivalently, that $\ann(f^{\otimes n})= S$.

Indeed, if not then there exists a prime ideal $\fp\supseteq \ann(f^{\otimes n})$. The dimension of $\loc S{\fp}$ is finite and the hypothesis of the theorem apply to the morphism $\loc f{\fp}\col \loc S{\fp}\to \loc X{\fp}$, so the already established case yields that for ${\loc f{\fp}}^{\otimes i}=0$ for $i\gg 0$  so that
\[
\loc {\ann(f^{i})}{\fp} = \ann({\loc f{\fp}}^{\otimes i}) = \loc S{\fp}\,.
\]
This is a contradiction.
\end{proof}

In the sequel, given a morphism $g\col X\to Y$ of DG $S$-modules, we write $\cone(g)$ for any DG $S$-module that fits in an exact triangle $X\xra{\ g\ }Y \to\cone(g)\to $, and use the fact that if $g=0$, then $Y$ is a retract of $\cone(g)$.

\begin{proof}[Proof of Theorem~\emph{\ref{thm:hopkins-dga}}]
Given Theorem~\ref{thm:tnt} the argument is akin to the one for  \cite[Theorem 7]{Hopkins:1987}, see also \cite[Lemma 1.2]{Neeman:1992} and \cite[Lemma 3.14]{Thomason:1997}, so we only sketch it.

The natural morphism $S\to \RHom_{S}(N,N)$ of DG $S$-modules induces a morphism 
\[
g\col M\to M\lotimes{S}\RHom_{S}(N,N)\,,
\]
and this gives rise to an exact triangle of perfect DG $S$-modules:
\[
F\xra{\ f\ } M\xra{\ g\ } M\lotimes{S}\RHom_{S}(N,N)\lra
\]
For any $\fp\in\Supp^{*}_{S}M$, one has $\HHH(k(\fp)\lotimes{S}N)\ne 0$, since $\Supp^{*}_{S}M\subseteq\Supp^{*}_{S}N$; here we use the alternative definition of support from Theorem~\ref{thm:support}. Since $k(\fp)$ is a graded field, the map $k(\fp)\lotimes{S}g$ is split-injective, so that $k(\fp)\lotimes{S}f=0$ in $\D(k(\fp))$. Theorem~\ref{thm:tnt} applies to yield that $M\lotimes{S}f^{\otimes n}=0$ for some positive integer $n$. Thus, $M^{\otimes (n+1)}$ is a retract of $M\lotimes S\cone(f^{\otimes n})$. A straightforward induction on $i$ shows that $\cone(f^{\otimes i})$ is in $\thick_{S}(N)$ for each $i\geq 1$, and hence so is $M\lotimes{S}\cone(f^{\otimes i})$. In conclusion, $M^{\otimes (n+1)}$ is in $\thick_{S}(N)$, and it follows that so is $M$; this uses the fact that, since $M$ is perfect, it is a retract of the DG $S$-module $\RHom_{S}(M,M)\lotimes SM$, which is isomorphic to $\RHom_{S}(M,S)\lotimes{S}M^{\otimes 2}$.
\end{proof}

\begin{defi}
A subset $V$ of $\spec S$ is specialization closed when the following property holds: If $\fp\subseteq\fq$
are homogenous prime ideals in $S$ and $\fp\in V$, then $\fq\in V$; equivalently, $V$ is a union of closed subsets, in the Zariski topology on $\spec S$.
\end{defi}

In what follows, given a subset $V$ of $\spec S$, set
\[
\thick(S)_{V}\ = \ \{M\in\thick(S)\mid \Supp^{*}_{S}M\subseteq V\}\,,
\]
viewed as a full subcategory of $\thick(S)$. It is not hard to verify that this is a thick subcategory when  $V$ is specialization closed. Theorem~\ref{thm:hopkins-dga} yields a perfect converse:

\begin{cor} 
\label{cor:thick-cdga}
If $\CatC$ is a thick subcategory of\, $\thick(S)$, then there exist a specialization closed subset $V\subseteq \spec S$ such that $\CatC = \thick(S)_{V}$.
\end{cor}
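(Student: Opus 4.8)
The plan is to show that every thick subcategory $\CatC$ of $\thick(S)$ is recovered from its support, using Theorem~\ref{thm:hopkins-dga} for one containment and a realizability input for the other. First I would set $V = \bigcup_{M\in\CatC}\Supp^{*}_{S}M$. Since the support of any object in $\Db(S)$ is a closed, hence specialization closed, subset of $\spec S$ — this follows from \eqref{eq:suppH}, as $\Supp^{*}_{S}M$ is the variety of the homogeneous ideal $\ann_{S}\HHH(M)$ — the set $V$, being a union of closed sets, is specialization closed. By construction $\CatC\subseteq\thick(S)_{V}$, and the content is the reverse inclusion.

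For that, fix $M\in\thick(S)_{V}$, so $\Supp^{*}_{S}M\subseteq V$. The support $\Supp^{*}_{S}M$ is closed, so $\Supp^{*}_{S}M = \CV(I)$ for the homogeneous ideal $I=\ann_{S}\HHH(M)$; write $I=(s_{1},\dots,s_{r})$ with the $s_i$ homogeneous. The key realizability observation is that the Koszul-type object $N = K_{1}\lotimes{S}\cdots\lotimes{S}K_{r}$, where $K_{i}=\cone(\shift^{|s_{i}|}S\xra{s_{i}}S)$, is a perfect DG $S$-module with $\Supp^{*}_{S}N = \CV(I)$. The computation of the support is the same sort of Nakayama/mapping-cone argument used in the proof of Theorem~\ref{thm:support}: localizing at $\fp$, one has $\HHH(\loc N{\fp})=0$ precisely when some $s_i$ becomes a unit in $\loc S{\fp}$, i.e.\ precisely when $\fp\not\supseteq I$. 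Hence $\Supp^{*}_{S}M = \Supp^{*}_{S}N$. Because $\CV(I)\subseteq V$, and $V$ is the union of the supports of objects of $\CatC$, and $\CatC$ is thick (so closed under finite unions of supports via exact triangles), there is a single object $N'\in\CatC$ with $\Supp^{*}_{S}N\subseteq\Supp^{*}_{S}N'$: indeed $\CV(I)$ is quasi-compact, so it is covered by finitely many $\Supp^{*}_{S}M_{j}$ with $M_{j}\in\CatC$, and then $N'=\bigoplus_{j}M_{j}$ (equivalently an iterated cone) lies in $\CatC$ and has support $\bigcup_j\Supp^{*}_{S}M_j\supseteq\CV(I)$.

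Now apply Theorem~\ref{thm:hopkins-dga} twice. From $\Supp^{*}_{S}N\subseteq\Supp^{*}_{S}N'$ we get $N\in\thick_{S}(N')\subseteq\CatC$. From $\Supp^{*}_{S}M=\Supp^{*}_{S}N$ we get $M\in\thick_{S}(N)\subseteq\CatC$. Hence $\thick(S)_{V}\subseteq\CatC$, giving equality.

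I expect the main obstacle to be the realizability step: verifying that a perfect DG $S$-module with a prescribed closed support exists, and in particular that the Koszul object $N$ has exactly $\CV(I)$ as its support rather than something smaller. This is where the hypothesis that $S$ is noetherian (so $I$ is finitely generated, making $N$ perfect) and the Nakayama Lemma~\ref{lem:nak} re-enter; everything else — specialization closedness of supports, quasi-compactness of closed subsets of $\spec S$, and assembling finitely many objects of $\CatC$ into one — is routine. One should also double-check the compatibility of $\Supp^{*}$ with finite direct sums and with exact triangles (subadditivity), which is immediate from \eqref{eq:suppH} since $\HHH$ takes sums to sums and triangles to long exact sequences, so $\Supp^{*}_{S}(M_1\oplus M_2)=\Supp^{*}_{S}M_1\cup\Supp^{*}_{S}M_2$ and $\Supp^{*}_{S}(\cone g)\subseteq\Supp^{*}_{S}X\cup\Supp^{*}_{S}Y$.
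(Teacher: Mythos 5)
Your proof is correct, and its core is the same as the paper's: take $V$ to be the union of the supports of objects in $\CatC$; for the reverse inclusion, use that $\Supp^{*}_{S}M$ is a closed (hence quasi-compact, since $S$ is noetherian) subset of $\spec S$ to cover it by finitely many $\Supp^{*}_{S}M_{j}$ with $M_{j}\in\CatC$; set $N'=\bigoplus_{j}M_{j}\in\CatC$; and apply Theorem~\ref{thm:hopkins-dga}. However, the detour through the Koszul object $N$ is superfluous for this corollary: once you know $\Supp^{*}_{S}M=\CV(I)\subseteq\bigcup_{j}\Supp^{*}_{S}M_{j}=\Supp^{*}_{S}N'$, Theorem~\ref{thm:hopkins-dga} applied directly to the pair $(M,N')$ gives $M\in\thick_{S}(N')\subseteq\CatC$; you never need an object with support \emph{exactly} $\CV(I)$, only one whose support \emph{contains} $\Supp^{*}_{S}M$. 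The realizability construction you describe (the $S/\!\!/I$-type object) is precisely what the paper invokes afterward to upgrade the corollary to a bijection between thick subcategories and specialization closed subsets, so it is a genuine and relevant observation — just not one this particular statement requires.
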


\begin{proof}
Since the support of any DG $S$ module $M$ in $\Db(S)$ is a closed subset of $\spec S$, by \eqref{eq:suppH}, the subset
\[
V=\bigcup_{M\in\CatC} \Supp^{*}_{S}M
\]
is specialization closed. It is immediate from definitions that $\CatC\subseteq \thick(S)_{V}$.

Let now $M$ be a perfect DG $S$-module with $\Supp^{*}_{S}M\subseteq V$. Since $\Supp^{*}_{S}M$ is a closed subset, there exists DG $S$-modules $N_{1},\dots,N_{s}$ in $\CatC$ such that 
\[
\Supp^{*}_{S}M \subseteq \bigcup_{i}\Supp^{*}_{S}(N_{i}) 
\]
Note that the subset on the right is the support of the DG $S$-module $N=\oplus_{i}N_{i}$, that is also perfect. Thus Theorem~\ref{thm:hopkins-dga} yields that $M$ is in $\thick_{S}(N)$, and hence in $\CatC$, since the latter is a thick subcategory. This proves that $\thick(S)_{V} \subseteq \CatC $.
\end{proof}

The realizability problem in $\thick(S)$ is easily solved: For any closed subset $V$ of $\spec S$, defined by an  homogeneous ideal $I$ in $S$, the perfect DG $S$-module $S/\!\!/I$ constructed as in \cite[\S2.5]{BIK2} has support $V$.  In conjunction with Corollary~\ref{cor:thick-cdga}, this yields a bijection between thick subcategories of $\thick(S)$ and specialization closed subsets of $\spec S$.

\section{DG modules over exterior algebras.}
\label{sec:lambda}
In this section, we sketch the development of a theory of support varieties for DG modules over an exterior algebra, and prove the version of Hopkins' theorem in that context. A lot of this material is taken from \cite{ABIM,AI}. 

Suppose that $k$ is a field and that $\Lambda = k\langle z_1, \dots, z_r \rangle$ is an exterior algebra on indeterminates $\{z_{i}\}$, all of odd negative degrees; recall that we are using upper gradings, unlike in \cite{ABIM,AI}. We consider $\Lambda$ as a DG algebra with $d^{\Lambda}=0$, let $\D(\Lambda)$ denote the derived category DG of $\Lambda$-modules, and let $\Db(\Lambda)$ denote the full subcategory consisting of DG $\Lambda$-modules $M$ such that the $\Lambda$-module $\HHH(M)$ is  finitely generated.
We note that $\Db(\Lambda)$ coincides with $\thick_{\Lambda}(k)$, the thick subcategory generated by the trivial module $k$; see \cite[Remark~7.5]{ABIM}.

Let $S$ be the polynomial algebra $k[x_1, \dots, x_r]$ on indeterminates $\{x_{i}\}$, where $|x_{i}|=  -|z_{i}|-1$, again viewed as a DG algebra with zero differential; $\D(S)$ and $\Db(S)$ have the expected meaning. In this case, $\Db(S)=\thick_{S}(S)$, that is to say, every DG $S$-module with finitely generated homology is perfect; see \cite[Remark~7.5]{ABIM}.

The main result that we need is the following equivalence of categories from \cite[Theorem 7.4]{ABIM}. It is a DG analogue of the Bernstein-Gelfand-Gelfand correspondence.

\begin{thm} 
\label{abim74}
\pushQED{\qed}
There exists an exact functor $\sfh\col \D(\Lambda) \to \D(S)$ with the property that its restriction to $\Db(\Lambda)$ is an equivalence 
\[
\sfh\col  \Db(\Lambda) \xra{\ \equiv\ }\Db(S). \qedhere
\]
\end{thm}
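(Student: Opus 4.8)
The plan is to construct the functor $\sfh$ explicitly using a Koszul-type duality between $\Lambda$ and $S$, and then verify the equivalence by reduction to the trivial module. Since $S$ has generators $x_i$ in degrees $-|z_i|-1$ (the ``shift by one'' characteristic of Koszul duality), the bimodule tying the two algebras together is the Koszul complex. Concretely, I would set $\sfh(M) = \RHom_{\Lambda}(k, M)$ equipped with its natural $S = \Ext_{\Lambda}(k,k)$-module structure (or, dually, $k \lotimes{\Lambda} M$ with its $S$-action via the negative Yoneda algebra), choosing whichever sign/grading convention matches the upper-graded conventions of the paper. The point is that $\Ext^*_{\Lambda}(k,k)$ is precisely the polynomial ring $S$, so $\RHom_{\Lambda}(k,-)$ lands in $\D(S)$; that it is exact is formal, since it is derived from an additive functor between the underlying categories of DG modules.

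The key steps, in order: first, set up the DG bimodule and check that $\sfh$ is a well-defined exact functor $\D(\Lambda)\to\D(S)$, in particular that it commutes with suspensions and sends triangles to triangles. Second, compute $\sfh(k)$: one shows $\RHom_{\Lambda}(k,k)\simeq S$ as a DG $S$-module (with zero differential), using the standard minimal semi-free resolution of $k$ over $\Lambda$, which is the Koszul-type complex built from the $z_i$. Third, observe that $\sfh$ restricts to a functor $\Db(\Lambda)\to\Db(S)$: since $\Db(\Lambda)=\thick_{\Lambda}(k)$ (quoted from \cite[Remark~7.5]{ABIM}) and $\sfh$ is exact and sends $k$ to $S\in\Db(S)$, it sends all of $\thick_{\Lambda}(k)$ into $\thick_{S}(S)=\Db(S)$. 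Fourth, prove the restricted functor is an equivalence: it suffices to check that $\sfh$ induces an isomorphism $\RHom_{\Lambda}(k,k)\xra{\equiv}\RHom_{S}(\sfh k,\sfh k)=\RHom_S(S,S)$ on the generator (this is again the computation $\Ext_\Lambda(k,k)\cong S$, now read as a ring isomorphism), and then invoke the standard ``generator with the right endomorphism DG algebra'' principle: an exact functor between thick subcategories each generated by a single object, inducing a quasi-isomorphism on the endomorphism DG algebras of the generators, is an equivalence. Alternatively, one exhibits the quasi-inverse $\RHom_S(k,-)$ (or $k\lotimes S-$) directly and checks the unit and counit are isomorphisms on $k$ and $S$ respectively, hence on all of $\Db(\Lambda)$, $\Db(S)$ by a thick-subcategory argument.

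The main obstacle I expect is bookkeeping with the gradings and signs: there are two gradings in play (the internal degree of $\Lambda$ and $S$, and the homological/suspension degree in the derived category), and the Koszul duality conflates them in a way that is notoriously error-prone, compounded here by the paper's use of upper gradings whereas \cite{ABIM,AI} use lower gradings. One must check carefully that the generators $x_i = $ (classes dual to $z_i$) really sit in degree $-|z_i|-1$ and not, say, $-|z_i|+1$ or $|z_i|-1$, and that the DG $S$-module structure on $\sfh(M)$ is by a genuine action (associativity, correct signs) rather than merely up to homotopy. The homological content — exactness, the computation of $\sfh(k)$, and the reduction of ``equivalence'' to ``quasi-isomorphism on endomorphisms of the generator'' — is standard once the framework of \cite[\S3--4]{ABIM} is in place; indeed the cleanest route is simply to cite \cite[Theorem~7.4]{ABIM} and remark that the only change is the passage from lower to upper gradings, which is a purely cosmetic relabeling $(-)^i = (-)_{-i}$. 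That is presumably what the authors intend by ``we sketch the development.''
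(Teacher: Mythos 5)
Your proposal is essentially the same as the paper's: the functor is $\sfh(M)=\Hom_\Lambda(F,M)\simeq\RHom_\Lambda(k,M)$ for an explicit semi-free resolution $F$ of $k$ built as a DG $(\Lambda\otimes_k S)$-module with differential $\delta=\sum z_i\otimes x_i$ (so that the $S$-action is genuine, not merely up-to-homotopy, which you correctly flag as the crux), and the equivalence reduces to $S\to\Hom_\Lambda(F,F)$ being a quasi-isomorphism plus the generator-endomorphism principle, exactly the content of Proposition~\ref{prop:propF}. The paper, like you, ultimately defers the detailed bookkeeping to \cite[Theorem~7.4]{ABIM}, so your conclusion that the cleanest route is to cite that result with a cosmetic regrading is accurate.
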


The construction of the functor $\sfh$ is sketched below. We refer the reader to \cite[\S7]{ABIM} (note the Corrigendum) for a complete proof of the theorem. 

The graded dual $\Hom_k(S,k)$ is a then a DG $S$-module. Consider the algebra $\Lambda \otimes_k S$ as a DG algebra with zero differential. Let $F$ denote the DG module over it with underlying graded module $\Lambda \otimes_{k} \Hom_{k}(S,k)$ and  differential given by multiplication by $\delta = \sum_{i = 1}^r z_i \otimes x_i$. It is a straightforward exercise to show that $\delta^2 = 0$. 

The DG ($\Lambda \otimes_k S$)-module $F$ is the DG module $\Hom_{k}(X,k)$ constructed in \cite[\S 7]{ABIM}. The result below is thus contained in (7.6.4), (7.6.5) and (7.6.3) of \cite{ABIM}. The gist of (1) and (2) is that $F$ is a semi-free resolution of $k$, viewed as a DG module over $\Lambda$.

\begin{prop}
\label{prop:propF}
The following statement hold.
\begin{enumerate}
\item
The map $\varepsilon\col F \to k$ induced by  augmentations $\Lambda\to k$ and $\Hom_{k}(S,k)\to k$, is a morphism of DG $\Lambda$-modules and a quasi-isomorphism. 
\item $\Hom_{\Lambda}(F,-)$ preserves quasi-isomorphisms.
\item 
The map $S\to \Hom_{\Lambda}(F,F)$ induced by the multiplicative action of $S$ on $F$ is a morphism of DG $S$-modules, and a quasi-isomorphism. \qed
\end{enumerate}
\end{prop}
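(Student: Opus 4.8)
The plan is to prove all three statements by recognizing $(F,\delta)$ as the classical Tate--Koszul resolution of $k$ over $\Lambda$, after which (1) and (2) are essentially bookkeeping and (3) reduces to making the Koszul-duality isomorphism $\Ext_{\Lambda}^{*}(k,k)\cong S$ explicit. For (1): the identities $\delta^{2}=0$ and the $\Lambda$-linearity of $\varepsilon$ are a routine sign check (using $z_{i}z_{j}=-z_{j}z_{i}$, $z_{i}^{2}=0$, and the commutativity of the $x_{i}$), and $\varepsilon\circ\delta=0$ because $\delta$ has image in the ideal of $F$ generated by the $z_{i}$ together with the augmentation ideal of $\Hom_{k}(S,k)$; hence $\varepsilon$ is a morphism of DG $\Lambda$-modules. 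To see it is a quasi-isomorphism I would split off one variable at a time: from $S=k[x_{1}]\otimes_{k}\cdots\otimes_{k}k[x_{r}]$ one obtains an isomorphism of DG $\Lambda$-modules $F\cong F_{1}\otimes_{k}\cdots\otimes_{k}F_{r}$, where $F_{i}=k\langle z_{i}\rangle\otimes_{k}\Hom_{k}(k[x_{i}],k)$ has differential given by multiplication by $z_{i}\otimes x_{i}$. In $F_{i}$ the elements $1\otimes(x_{i}^{n})^{*}$ and $z_{i}\otimes(x_{i}^{n-1})^{*}$ span an acyclic two-dimensional subcomplex for each $n\geq 1$, while $1\otimes 1$ is a surviving cycle, so $\HHH(F_{i})=k$ and $\varepsilon_{i}\col F_{i}\to k$ is a quasi-isomorphism (with an evident contracting homotopy). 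Since each $F_{i}$ is semi-free over $k\langle z_{i}\rangle$ and $k$ is a field, the tensor product $\varepsilon_{1}\otimes\cdots\otimes\varepsilon_{r}=\varepsilon$ is again a quasi-isomorphism.

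For (2): as a graded $\Lambda$-module, $F=\Lambda\otimes_{k}\Hom_{k}(S,k)$ is free on a homogeneous $k$-basis of $\Hom_{k}(S,k)$. Filtering by $W_{n}=\mathrm{span}\{m^{*}\mid m\text{ a monomial of polynomial degree }\leq n\}$ gives an exhaustive increasing filtration $F_{(n)}=\Lambda\otimes_{k}W_{n}$ by DG submodules with $\delta(F_{(n)})\subseteq F_{(n-1)}$, because $\delta$ sends $m^{*}$ into a sum of the $(m/x_{i})^{*}$; hence the induced differential on each $F_{(n)}/F_{(n-1)}\cong\Lambda\otimes_{k}(W_{n}/W_{n-1})$ vanishes and this subquotient is a direct sum of suspensions of $\Lambda$. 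Thus $F$ is semi-free over $\Lambda$, and the general fact recalled in Section~\ref{sec:support} gives that $\Hom_{\Lambda}(F,-)$ preserves quasi-isomorphisms.

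For (3): the $S$-action on $F$ coming from the $(\Lambda\otimes_{k}S)$-module structure is by central elements of even degree, so it commutes with $\delta$; therefore $\mu\col S\to\Hom_{\Lambda}(F,F)$, $s\mapsto(s\cdot{-})$, lands in the cycles and is a morphism of DG $S$-modules. By (1) and (2), $\varepsilon$ induces a quasi-isomorphism $\Hom_{\Lambda}(F,\varepsilon)\col\Hom_{\Lambda}(F,F)\to\Hom_{\Lambda}(F,k)$, and there is a canonical identification $\Hom_{\Lambda}(F,k)=\Hom_{\Lambda}(\Lambda\otimes_{k}\Hom_{k}(S,k),k)=\Hom_{k}(\Hom_{k}(S,k),k)$ on which the induced differential is zero (again since $\delta$ lands in the augmentation ideal). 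Tracing through the identifications, the composite $\Hom_{\Lambda}(F,\varepsilon)\circ\mu\col S\to\Hom_{k}(\Hom_{k}(S,k),k)$ is precisely the canonical biduality map, which is an isomorphism because $S$ is finite-dimensional in each degree. Hence $\mu$ is a quasi-isomorphism.

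The step I expect to be the main obstacle is (3): one must pin down the identification of $\Hom_{\Lambda}(F,k)$ and check that the composite with $\Hom_{\Lambda}(F,\varepsilon)$ is genuinely the biduality map rather than a twist of it (by a sign, or by the antipode of $\Lambda$); keeping the Koszul signs straight in the one-variable decomposition used for (1) is also delicate but routine. Everything else is formal once $F$ is identified as a semi-free resolution of $k$ over $\Lambda$. Alternatively, all three assertions can be read off directly from (7.6.3)--(7.6.5) of \cite{ABIM}.
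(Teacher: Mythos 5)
The paper does not actually prove Proposition~\ref{prop:propF}: it identifies $F$ with the DG module $\Hom_k(X,k)$ of \cite[\S7]{ABIM} and cites (7.6.3)--(7.6.5) there. You instead supply a direct, self-contained argument, and it is correct. Your one-variable decomposition $F\cong F_1\otimes_k\cdots\otimes_k F_r$ is the standard Koszul-complex computation, and since everything is over a field the K\"unneth theorem gives $\HHH(F)\cong k$ as claimed; your filtration $F_{(n)}=\Lambda\otimes_k W_n$ by internal polynomial degree does exhibit $F$ as semi-free over $\Lambda$ (it is exhaustive because the \emph{graded} dual $\Hom_k(S,k)$ is the direct sum of the duals of the finite-dimensional graded pieces of $S$, so each homogeneous element is a finite combination of monomial duals); and in (3), since the $S$-action is by central even elements it commutes with $\delta$, while $\Lambda$-linearity forces the induced differential on $\Hom_\Lambda(F,k)\cong\Hom_k(\Hom_k(S,k),k)$ to vanish, and the composite with $\mu$ is indeed graded biduality, an isomorphism because $S$ is degreewise finite-dimensional. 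You rightly flag the sign-bookkeeping (Koszul signs in the tensor decomposition and in the $S$-action) as the only delicate point; none of it affects the conclusions. What your approach buys is independence from \cite{ABIM}; what it costs is exactly the sign-checking you already anticipate, which the citation in the paper avoids.
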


For any DG $\Lambda$-module $M$, the complex $\Hom_\Lambda(F,M)$ is a $\dg$ $S$-module. Hence, thanks to Proposition~\ref{prop:propF}(2), one can define an exact functor 
\[
\sfh\col \D(\Lambda) \to \D(S)\quad\text{with $\sfh(M) = \Hom_\Lambda(F,M)$}.
\]
The other parts of Proposition~\ref{prop:propF} imply that this induces an equivalence of triangulated categories 
between $\Db(\Lambda)$ and $\Db(S)$. From this we get a natural notion of support variety for DG $\Lambda$-modules. 

\begin{defi}
The support of $M\in\Db(\Lambda)$ is the subset of $\spec S$ defined by
\[
V_\Lambda(M) = \Supp^{*}_S\,\sfh(M) \subseteq \spec S\,.
\]
\end{defi}
From \eqref{eq:suppH} one then gets the first equality below:
\[
V_\Lambda(M) \ = \ \Supp^{*}_S\,\HHH(\sfh(M)) \ = \ \Supp^{*}_S\,\Ext_\Lambda(k,M)\,.
\]
The second one is from Proposition~\ref{prop:propF}. With this information, we can easily prove the main result of this section. 

\begin{thm}
\label{thm:hopkins-lambda}
The following statements hold:
\begin{enumerate}
\item
For  $M,N$ in $\Db(\Lambda)$ one has $V_\Lambda(M) \subseteq V_\Lambda(N)$ if and only if $M$ is in $\thick_\Lambda(N)$.
\item 
For any closed subset $V\subseteq \spec S$ there exists  $M\in\Db(\Lambda)$ with $V_\Lambda(M) = V$.
\end{enumerate}
\end{thm}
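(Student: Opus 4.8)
The strategy is to transport everything through the BGG equivalence $\sfh\col\Db(\Lambda)\xra{\equiv}\Db(S)$ of Theorem~\ref{abim74} and then invoke the results already established for $S$. The key point is that $\sfh$ is an exact equivalence of triangulated categories, so it carries $\thick_{\Lambda}(N)$ to $\thick_{S}(\sfh(N))$, and by the very definition $V_{\Lambda}(M)=\Supp^{*}_{S}\sfh(M)$. One first observes that $\Db(S)=\thick_{S}(S)$ — the polynomial ring $S$ here is graded with all generators in positive degree, so every DG $S$-module with finitely generated homology is perfect (this is recalled in the excerpt, citing \cite[Remark~7.5]{ABIM}). Hence ``perfect'' imposes no restriction on the target side, and Theorem~\ref{thm:hopkins-dga} and Corollary~\ref{cor:thick-cdga} apply to \emph{all} objects of $\Db(S)$.

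For part (1): given $M,N\in\Db(\Lambda)$, the containment $V_{\Lambda}(M)\subseteq V_{\Lambda}(N)$ says precisely $\Supp^{*}_{S}\sfh(M)\subseteq\Supp^{*}_{S}\sfh(N)$. Since $\sfh(M)$ and $\sfh(N)$ lie in $\Db(S)=\thick_{S}(S)$, they are perfect DG $S$-modules, so Theorem~\ref{thm:hopkins-dga} gives $\sfh(M)\in\thick_{S}(\sfh(N))$. Applying the quasi-inverse of $\sfh$ — which is again exact, hence preserves thick subcategories — yields $M\in\thick_{\Lambda}(N)$. The converse direction is the easy half: if $M\in\thick_{\Lambda}(N)$ then $\sfh(M)\in\thick_{S}(\sfh(N))$, and support is monotone along thick subcategories (an object of $\thick_{S}(\sfh(N))$ has support contained in $\Supp^{*}_{S}\sfh(N)$, since the collection of such objects is a thick subcategory containing $\sfh(N)$, as noted before Corollary~\ref{cor:thick-cdga}); this gives $V_{\Lambda}(M)\subseteq V_{\Lambda}(N)$.

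For part (2): by the realizability statement recorded after Corollary~\ref{cor:thick-cdga}, for any closed $V\subseteq\spec S$ defined by a homogeneous ideal $I$ the DG module $S/\!\!/I$ is a perfect DG $S$-module with $\Supp^{*}_{S}(S/\!\!/I)=V$. Set $M=\sfh^{-1}(S/\!\!/I)\in\Db(\Lambda)$; then $V_{\Lambda}(M)=\Supp^{*}_{S}\sfh(M)=V$ by construction.

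\textbf{Main obstacle.} There is essentially no obstacle here — the work has all been done in Sections~\ref{sec:support} and~\ref{sec:cdga}, and in the construction of $\sfh$. The only points that need a word of care are: (i) checking that an exact equivalence of triangulated categories does indeed send thick subcategories to thick subcategories and intertwines $\thick_{\Lambda}(N)$ with $\thick_{S}(\sfh(N))$, which is formal; and (ii) the observation that on the $S$-side ``perfect'' is automatic, so that Theorem~\ref{thm:hopkins-dga} applies without a perfection hypothesis on $M$ and $N$ — this is exactly where the positivity of the degrees of the $x_{i}$ (equivalently, the negativity of the degrees of the $z_{i}$) is used, via $\Db(S)=\thick_{S}(S)$.
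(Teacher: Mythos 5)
Your proof is correct and takes essentially the same route as the paper: transport through the BGG equivalence $\sfh$, observe that everything in $\Db(S)$ is perfect, and invoke Theorem~\ref{thm:hopkins-dga} for part (1) and realizability for part (2). You are in fact slightly more thorough than the paper's proof, which leaves the easy ``only if'' direction of (1) implicit.
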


\begin{proof}
By hypothesis, $\Supp^{*}_S\,\sfh(M)\subseteq \Supp^{*}_S\,\sfh(N)$, so Theorem~\ref{thm:hopkins-dga} yields that $\sfh(M)$ is in $\thick_{S}(\sfh(N))$; as $\sfh$ is an equivalence it follows that $M$ is in  $\thick_\Lambda(N)$.

For the second statement, we observer that $V$ is the variety of a ideal $I \subset S$. So we let $M = \sfh^{-1}(S/I).$
\end{proof}

\begin{rem}
The results in this section carry over to the case when the generators $\{z_{i}\}$ of the exterior algebra $\Lambda$ are in degree zero; in this case $S$ will be a polynomial algebra on generators of degree $-1$. 
\end{rem}

\section{Artinian complete intersection rings}
\label{sec:ci}
Let $k$ be a field, $k[z_{1},\dots,z_{r}]$ the polynomial algebra over $k$ in indeterminates $\bs z = z_{1},\dots,z_{c}$, and let $f_{1},\dots,f_{r}$ be a regular sequence in $(\bs z)^{2}$. Set
\[
R= k[z_{1},\dots,z_{r}]/(f_{1},\dots,f_{r})
\]
Thus $R$ is a \emph{complete intersection ring} of (Krull) dimension zero. The main result in this section is Theorem~\ref{thm:hopkins-ci}, that is an analogue of Hopkins' theorem for $\Db(R)$, the bounded derived category of finitely generated $R$-modules.

Since $f_{j}$ is in $(\bs z)^{2}$ for each $1\leq j\leq r$, we can write 
\begin{equation}
\label{eqn:relations}
f_{j} = \sum_{1\leqslant h\leqslant i\leqslant r}c_{hi,j}\,z_{h}z_{i}
\end{equation}
with each $c_{hi,j}$ in $k[z_{1},\dots,z_{r}]$. These elements give rise to structure constants in the Ext-algebra of $R$, as discovered by Sj\"odin~\cite[Theorem 5]{Sj}, and described below. 

\begin{rem}
\label{rem:ext}
Let $R = k\{\xi_{1},\dots,\xi_{r},\theta_{1},\dots,\theta_{r}\}$ be the tensor algebra on $k$, where the $\{\xi_{i}\}$ 
and the $\{\theta_{j}\}$ are indeterminates of degree $1$ and $2$, respectively. Then $\Ext_{R}(k,k)$ is this tensor algebra modulo the ideal generated by the relations
\begin{gather*}
\xi_{h}\xi_{i} + \xi_{h}\xi_{i} = - \sum_{j=1}^{r}\overline{c}_{hi,j}\theta_{j} \quad\text{for $h<i$}\quad \text{and} \quad 
\xi_{h}^{2} = - \sum_{j=1}^{r}\overline{c}_{hh,j}\theta_{j} \\
\theta_{i}\xi_{j} - \xi_{j} \theta_{i} = 0 = \theta_{i}\theta_{j} - \theta_{j} \theta_{i} \quad \text{for all $i,j$}
\end{gather*}
where for any $c$ in $k[z_{1},\dots,z_{r}]$, we write $\overline{c}$ for its image in the polynomial ring $k=k[z_{1},\dots,z_{r}]/(z_{1},\dots,z_{r})$.

Thus, $k[\bs{\theta}]=k[\theta_{1},\dots,\theta_{r}]$ is a polynomial ring in the center of $\Ext_{R}(k,k)$, and $\Ext_{R}(k,k)$ is a finitely generated (even free) module over it, generated by $\xi_{1},\dots,\xi_{r}$.
\end{rem}

\begin{exm}
\label{exm:ke}
The example to bear in mind is the group algebra of an elementary abelian $p$-group, $E = (\bZ/p)^{r}$, over a field of characteristic $p$. Since
\[
kE\cong k[z_{1},\dots,z_{r}]/(z_{1}^{p},\dots,z_{r}^{p})
\]
one can choose $c_{hi,j}$ to be $z_{j}^{p-2}$ when $h=i=j$, and zero otherwise. Then the description of its Ext-algebra from Remark~\ref{rem:ext} recovers a well-known computation:
\[
\Ext_{kE}(k,k) =
\begin{cases}
k[\xi_{1},\dots,\xi_{r}] & \text{if $p=2$}\\
k\langle \xi_{1},\dots,\xi_{r}\rangle\otimes_{k} k[\theta_{1},\dots,\theta_{r}] & \text{for $p$ odd}.
\end{cases}
\]
\end{exm}

%\begin{gather*}
%[\xi_{h},\xi_{i}] = - \sum_{j=1}^{r}\overline{c}_{hi,j}\theta_{j} \quad\text{for $1\leq h<i\leq r$} \\
%\xi_{h}^{2} = - \sum_{j=1}^{r}\overline{c}_{hh,j}\theta_{j} \quad\text{for $1\leq h\leq r$} \\
%[\theta_{i},\xi_{j}]= 0 = [\theta_{i},\theta_{j}] \quad \text{for all $i,j$}
%\end{gather*}

For any complex $M$ of $R$-modules, the graded $k$-vector space $\Ext_{R}(k,M)$ has a structure of a graded right module over $\Ext_{R}(k,k)$, and hence over $k[\bs\theta]$. The next result  is due to Gulliksen~\cite[Theorem 3.1]{Gu} for general complete intersection rings; we furnish an elementary proof, specific to our context: 

\begin{lemma}
\label{lem:thickk}
There is an equality $\Db(R)=\thick_{R}(k)$. In particular,  for each $M$ in $\Db(R)$, the $k[\bs \theta]$-module $\Ext_{R}(k,M)$ is noetherian.
\end{lemma}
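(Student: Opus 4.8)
The goal is to show $\Db(R)=\thick_R(k)$; the second assertion about noetherianity of $\Ext_R(k,M)$ over $k[\bs\theta]$ then follows formally. Indeed, once we know $M\in\thick_R(k)$, the functor $\Ext_R(k,-)$ sends $M$ to the thick subcategory of graded $k[\bs\theta]$-modules (via the $\Ext_R(k,k)$-module structure described in Remark~\ref{rem:ext}) generated by $\Ext_R(k,k)$ itself; since $k[\bs\theta]$ is noetherian and $\Ext_R(k,k)$ is a finitely generated $k[\bs\theta]$-module, every module in that thick subcategory is noetherian, and each cohomology group of $\Ext_R(k,M)$ is finite-dimensional over $k$ to boot. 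So the crux is the equality $\Db(R)=\thick_R(k)$.

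The inclusion $\thick_R(k)\subseteq\Db(R)$ is clear, since $k$ has finitely generated homology and $\Db(R)$ is a thick subcategory of $\D(R)$. For the reverse inclusion, the plan is to reduce to the case of a single module and then induct. First, since $R$ is artinian, every finitely generated $R$-module $M$ has a finite composition series with all factors isomorphic to $k$; by repeatedly using the triangles coming from short exact sequences $0\to M'\to M\to k\to 0$, one deduces that every finitely generated $R$-module lies in $\thick_R(k)$. Next, for a general object $X\in\Db(R)$, represent it by a complex with finitely generated homology; a standard soft truncation argument (on the finitely many nonzero homology modules, using the triangles relating $X$ to its truncations $\tau_{\leq n}X$ and $\tau_{\geq n}X$) expresses $X$ as built from finitely many shifts of the $R$-modules $\HHH^i(X)$ in finitely many triangles. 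Since each $\HHH^i(X)\in\thick_R(k)$ and $\thick_R(k)$ is closed under shifts and cones, it follows that $X\in\thick_R(k)$.

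The one place that needs a little care—though it is not a genuine obstacle—is making sure the truncation argument is legitimate in the DG setting: one wants $X$ in $\Db(R)$ to be quasi-isomorphic to a complex that is bounded, so that only finitely many truncation triangles are needed. This is where the hypothesis $M\in\Db(R)$ (finitely generated total homology, hence homology concentrated in finitely many degrees since $R$ is graded—or rather, here $R$ is concentrated in degree zero, so $\Db(R)$ is literally the bounded derived category of finitely generated $R$-modules) is used. Given that, the truncation argument is entirely routine. The main conceptual input is simply that $R$ being artinian forces $k$ to generate all of $\modd\,R$ under extensions, which is elementary. I expect no serious difficulty; the proof is a short bookkeeping exercise once the two ingredients—composition series for $R$-modules and bounded truncation for complexes—are in place.
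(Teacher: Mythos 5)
Your proof is correct, but it takes a somewhat different route from the paper's. For the inclusion $\Db(R)\subseteq\thick_R(k)$, the paper uses a single uniform filtration: represent $M$ by a bounded complex of finitely generated $R$-modules and filter the entire complex by the subcomplexes $(\bs z)^jM$, whose successive quotients are bounded complexes of finite-dimensional $k$-vector spaces, hence lie in $\thick_k(k)\subseteq\thick_R(k)$. You instead do it in two stages: composition series show each finitely generated $R$-module is in $\thick_R(k)$ (using that $R$ is artinian local with residue field $k$), and then soft truncations build an arbitrary bounded complex from shifts of its homology modules. Both are valid; the paper's filtration is the more compact argument and avoids invoking truncation functors, while yours is the standard textbook decomposition and makes the role of artinianity more transparent. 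One small caveat about your second paragraph: the class of graded $k[\bs\theta]$-modules is abelian, not triangulated, so speaking of "the thick subcategory of graded $k[\bs\theta]$-modules generated by $\Ext_R(k,k)$" is imprecise. The cleaner (and intended) statement, which is exactly what the paper records, is that the collection of $M\in\Db(R)$ with $\Ext_R(k,M)$ noetherian over $k[\bs\theta]$ is a thick subcategory of $\Db(R)$ containing $k$, by the long exact sequence in $\Ext$; combined with $\Db(R)=\thick_R(k)$ this finishes the argument.
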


\begin{proof}
Let $M$ be  in $\Db(R)$. Replacing it by a quasi-isomorphic complex, if necessary, we may assume $M$ is a bounded complex of finitely generated $R$-modules. It then has a finite filtration $M \supseteq (\bs z)M \supseteq (\bs z)^{2}M\supseteq \cdots 0$ by sub-complexes, with sub-quotients isomorphic to a complex of $k$-vector space, of finite rank over $k$. Such a complex is in $\thick_{k}(k)$ and hence in $\thick_{R}(k)$. It follows that $M$ itself is in $\thick_{R}(k)$. This establishes that $\Db(R)\subseteq\thick_{R}(k)$; the reverse inclusion is a tautology. 

For the second statement, one has only to note that if $N'\to N\to N''\to $ is an exact triangle of complexes of $R$-modules, and the $k[\bs\theta]$-modules $\Ext_{R}(k,N')$ and $\Ext_{R}(k,N'')$ are finitely generated, then so is $\Ext_{R}(k,N)$.
\end{proof}

This permits one to introduce a notion of support for complexes of $R$-modules.

\begin{defi}
\label{defi:civariety}
The support variety of any complex $M\in\Db(R)$ is the subset 
\[
V_{R}(M) = \Supp^{*}_{k[\bs{\theta}]}\Ext_{R}(k,M) \subseteq \spec k[\bs{\theta}]\,.
\]
\end{defi}
Note that $V_{R}(k) = \spec k[\bs{\theta}]$.

One has an exact analogue of Theorem~\ref{thm:hopkins-dga}:

\begin{thm}
\label{thm:hopkins-ci}
Let $M$ and $N$ be complexes in $\Db(R)$. If $V_R(M) \subseteq V_R(N)$, then $M$ is inÊ $\thick_R(N)$. 
\end{thm}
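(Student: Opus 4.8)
The plan is to reduce Theorem~\ref{thm:hopkins-ci} to Theorem~\ref{thm:hopkins-lambda} by means of a change-of-rings functor that relates $\Db(R)$ to $\Db(\Lambda)$, where $\Lambda$ is the exterior algebra on generators $z_{1},\dots,z_{r}$ of degree $-1$ (or, dually, to the derived category of DG modules over the polynomial ring $k[\bs\theta]$ on generators of degree $2$). The mechanism is the one developed in \cite{ABIM}: the Ext-algebra computation recalled in Remark~\ref{rem:ext} shows that $\Ext_R(k,k)$ contains the central polynomial subalgebra $k[\bs\theta]$ over which it is finitely free, and by Koszul duality this is precisely the DG-algebra situation of Section~\ref{sec:lambda}. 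So the first step is to set up an exact functor $\mathsf{t}\col \Db(R)\to \Db(\Lambda)$ (concretely, $M\mapsto \RHom_R(k,M)$ viewed as a DG module over the Koszul dual of $R$, or equivalently a DG-Ext construction), and to record that under this functor $V_R(M)$ corresponds to $V_\Lambda(\mathsf{t}(M))$; this is essentially a restatement of Definition~\ref{defi:civariety} together with the formula $V_\Lambda(-)=\Supp^*_S\Ext_\Lambda(k,-)$ from Section~\ref{sec:lambda}.

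Next I would address the fact that $\mathsf{t}$ is \emph{not} an equivalence onto $\Db(\Lambda)$ — this is the key difference from the exterior-algebra case, and the main obstacle. The functor $\mathsf{t}$ is, however, fully faithful on a suitable subcategory and, more to the point, it is ``conservative enough'': $\mathsf{t}(M)=0$ iff $\Ext_R(k,M)=0$ iff $M=0$ (using Lemma~\ref{lem:thickk}, which gives $\Db(R)=\thick_R(k)$, so no object is annihilated by $\RHom_R(k,-)$). The crucial property I would extract is that $\mathsf{t}$ \emph{detects thick containment}: if $\mathsf{t}(M)\in\thick_\Lambda(\mathsf{t}(N))$, then $M\in\thick_R(N)$. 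The way to get this is the standard retract trick already used twice in the excerpt (in the proofs of Theorems~\ref{thm:tnt} and \ref{thm:hopkins-dga}): since $k$ generates $\Db(R)$, the object $M$ is built from $k$, and one shows $M$ is a retract of something of the form $\RHom_R(N',N)\lotimes{} M'$ assembled out of $N$ and pieces that $\mathsf{t}$ controls; alternatively, invoke the adjunction between $\mathsf{t}$ and its left (or right) adjoint, push the thick-subcategory containment back along the adjoint, and use that the unit/counit of the adjunction makes $M$ a summand of a complex built functorially from $N$. Either way, the point is that a thick subcategory of $\Db(R)$ is determined by its image under $\mathsf{t}$ because $\thick_R(k)=\Db(R)$ collapses the discrepancy between $R$ and its Koszul dual.

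Granting that, the proof finishes quickly. Given $M,N\in\Db(R)$ with $V_R(M)\subseteq V_R(N)$, translate via the first step to $V_\Lambda(\mathsf{t}(M))\subseteq V_\Lambda(\mathsf{t}(N))$ inside $\spec k[\bs\theta]$. Apply Theorem~\ref{thm:hopkins-lambda}(1) to conclude $\mathsf{t}(M)\in\thick_\Lambda(\mathsf{t}(N))$. Then apply the detection property from the second step to conclude $M\in\thick_R(N)$, as desired. I expect the only genuinely delicate point to be a clean formulation and proof of that detection property — in particular verifying that $\mathsf{t}$ (respectively its adjoint) sends $\thick$'s to $\thick$'s in the right direction and that the relevant retract lives in $\Db$ rather than a larger category — so I would be careful to phrase it so that, as advertised in the introduction, no infinitely generated modules enter; the finiteness in Lemma~\ref{lem:thickk} (noetherianity of $\Ext_R(k,M)$ over $k[\bs\theta]$) is exactly what keeps everything inside the bounded world.
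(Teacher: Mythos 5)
The overall strategy you outline — reduce to the exterior-algebra case of Theorem~\ref{thm:hopkins-lambda} via a change-of-rings functor, matching support varieties along the way — is the right one and is indeed what the paper does. But there are two genuine problems, one of choice of functor and one (more serious) of a missing key step.

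First, the functor you propose, $M\mapsto\RHom_R(k,M)$, is not what the paper uses, and it is not straightforward to make it land in $\Db(\Lambda)$: the object $\RHom_R(k,M)$ is naturally a DG module over the endomorphism DGA $\RHom_R(k,k)$, not over the exterior algebra $\Lambda$ of Section~\ref{sec:lambda}, and equipping it with a $\Lambda$- or $S$-structure requires a formality result that is not free. The paper sidesteps this by using the Koszul complex $K$ on $z_1,\dots,z_r$ (a DG $R$-algebra with zero differential on generators): the functor is $\sft = K\otimes_R - \colon \Db(R)\to\Db(K)$, left adjoint to restriction $\sfr$, and the formality needed is provided explicitly by the quasi-isomorphism $\Lambda\to K$ (Lemma~\ref{lem:KtoLambda}, resting on Sj\"odin's computation \eqref{eqn:relations} and Tate's description of $\HHH(K)$). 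Only then does the adjunction $\Ext_\Lambda(k,\sfi\sft M)\cong\Ext_R(k,M)$ (Proposition~\ref{prop:vr}) give you the support matching you want.

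Second, and more importantly, you identify the ``detection property'' — if $\sft M\in\thick(\sft N)$ then $M\in\thick_R(N)$ — as the delicate point and then gesture at two ways to get it, neither of which works as stated. The retract trick from the proofs of Theorems~\ref{thm:tnt} and~\ref{thm:hopkins-dga} relies on $M$ being \emph{perfect} (so that $M$ is a retract of $\RHom(M,M)\otimes M$); a typical object of $\Db(R)$ is not perfect over $R$, so that argument does not transfer. And the adjunction does \emph{not} make ``$M$ a summand of a complex built functorially from $N$'': the unit $M\to\sfr\sft M = K\otimes_R M$ is not split in general, so pushing $\sft M\in\thick_K(\sft N)$ through $\sfr$ only gives $\sfr\sft M\in\thick_R(\sfr\sft N)$, which is weaker than what you need. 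What actually closes the gap is Proposition~\ref{prop:thickk}: $\thick_R(M)=\thick_R(\sfr\sft M)$, which reduces to the statement $R\in\thick_R(K)$, and this is precisely where the hypothesis that $R$ is artinian (Krull dimension zero) enters — either as a special case of Theorem~\ref{thm:hopkins-dga} applied to the two perfect complexes $R$ and $K$ with the same one-point support, or via the elementary Koszul-complex argument in \cite[Lemma~6.0.9]{Hovey/Palmieri/Strickland:1997}. Without this step, the argument does not go through, and it is not a consequence of Lemma~\ref{lem:thickk} or of adjunction formalism alone.
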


See Remark~\ref{rem:cihistory} for antecedents. The result is proved by a reduction to the case of DG modules over exterior algebras, Theorem~\ref{thm:hopkins-lambda}, via functors described below.

\begin{construction}
\label{con:koszul}
Let $K$ be the Koszul complex on the sequence $z_{1},\dots,z_{r}$, viewed as a DG algebra. It is thus an exterior algebra over $R$ on indeterminates $y_{1},\dots, y_{r}$, with each $y_{i}$ of degree $-1$, with differential determined by 
\[
d(z_{i})= 0\quad\text{and} \quad d(y_{i}) = z_{i}\,.
\]
Since $K$ is a finite free complex of $R$-modules, the assignment $M\mapsto K\otimes_{R}M$ induces an exact functor $\sft\col \Db(R)\to \Db(K)$; it is left adjoint to the restriction functor $\sfr\col \Db(K)\to \Db(R)$ induced by the morphism of DG algebras $R\to K$. 
\end{construction}

The proof of the result below only uses the fact that $R$ is an artinian local ring; the complete intersection property is not required. The gist of the statement is that the functors $\sft$ and $\sfr$, though not equivalences, come close to that.

\begin{prop}
\label{prop:thickk}
One has $\thick_{R}(M) = \thick_{R}(\sfr\sft M)$ for each $M$ in $\Db(R)$.
\end{prop}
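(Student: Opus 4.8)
The plan is to show the two inclusions $\thick_R(\sfr\sft M)\subseteq\thick_R(M)$ and $\thick_R(M)\subseteq\thick_R(\sfr\sft M)$ separately, and the first is the easy one. Since $\sft M = K\otimes_R M$ and $K$ is a finite free complex of $R$-modules, $\sfr\sft M = \sfr(K\otimes_R M)$ is obtained from $M$ by a finite number of cones and shifts: the Koszul complex $K$ has a filtration whose subquotients are shifts of $R$, so $K\otimes_R M$, viewed as a complex of $R$-modules, lies in $\thick_R(M)$. Hence $\thick_R(\sfr\sft M)\subseteq\thick_R(M)$, and in particular $\sfr\sft M$ is in $\Db(R)$ so all the categories in sight make sense.

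For the reverse inclusion, the idea is to exploit that $R$ is artinian local, with residue field $k$, so that $M$ is built from $k$, and to track what happens to $k$ under $\sfr\sft$. First I would compute $\sft k = K\otimes_R k$: since $d(y_i)=z_i$ and the $z_i$ act as zero on $k$, the differential on $K\otimes_R k$ is zero, so $\sft k$ is the exterior algebra $k\langle y_1,\dots,y_r\rangle$ over $k$ with zero differential — equivalently, as a complex of $k$-vector spaces it is a direct sum of $2^r$ shifts of $k$. Therefore $\sfr\sft k$ is, as an object of $\Db(R)$, a finite direct sum of shifts of $k$ (with $k$ appearing as a summand, in degree $0$). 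Consequently $\thick_R(\sfr\sft k) = \thick_R(k)$.

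Now comes the step that does the real work: bootstrapping from $k$ to an arbitrary $M\in\Db(R)$. Using Lemma~\ref{lem:thickk} we have $\Db(R)=\thick_R(k)$, so $M\in\thick_R(k)=\thick_R(\sfr\sft k)$. The goal is to convert this into $M\in\thick_R(\sfr\sft M)$. The clean way to do this is a ``naturality of $\sfr\sft$'' argument: one shows that for any $M$, $\thick_R(\sfr\sft(-))$ applied to a thick subcategory generated by an object behaves well, i.e. if $X\in\thick_R(Y)$ then $\sfr\sft X\in\thick_R(\sfr\sft Y)$ (immediate, since $\sfr\sft$ is an exact functor). Combined with the first inclusion this gives $\thick_R(\sfr\sft X)\subseteq\thick_R(\sfr\sft Y)$ whenever $\thick_R(X)\subseteq\thick_R(Y)$. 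Apply this with $X=k$, $Y=M$: from $k\in\Db(R)=\thick_R(M)$ we get $\thick_R(\sfr\sft k)\subseteq\thick_R(\sfr\sft M)$, hence $\thick_R(k)=\thick_R(\sfr\sft k)\subseteq\thick_R(\sfr\sft M)$. But $M\in\Db(R)=\thick_R(k)$, so $M\in\thick_R(\sfr\sft M)$, as desired. Together with the first paragraph this yields $\thick_R(M)=\thick_R(\sfr\sft M)$.

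The main obstacle, such as it is, is the computation $\sfr\sft k\simeq\bigoplus(\text{shifts of }k)$ in $\Db(R)$ and, relatedly, making sure the filtration argument for $\sfr\sft M\in\thick_R(M)$ is airtight — one must check that tensoring a complex of $R$-modules with the finite free complex $K$ really does keep it inside $\thick_R(M)$, which is a standard consequence of the stupid filtration on $K$ but should be stated. Everything else is a formal manipulation with the exact functors $\sft,\sfr$ and the identity $\Db(R)=\thick_R(k)$ from Lemma~\ref{lem:thickk}; the complete intersection hypothesis is not used, only that $R$ is artinian local.
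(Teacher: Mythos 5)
There is a genuine gap. In the second half you assert ``$k\in\Db(R)=\thick_R(M)$'' and apply your naturality observation with $X=k$, $Y=M$. But $\Db(R)=\thick_R(M)$ is false for general $M\in\Db(R)$: Lemma~\ref{lem:thickk} says $\Db(R)=\thick_R(k)$, which means $M\in\thick_R(k)$ for every $M$, not $k\in\thick_R(M)$. The containment you need goes the other way and fails already for $M=R$: since $R$ is artinian but not regular, $k$ has infinite projective dimension, so $k\notin\thick_R(R)$. Your naturality step is sound as stated ($X\in\thick_R(Y)$ implies $\sfr\sft X\in\thick_R(\sfr\sft Y)$, hence $\thick_R(X)\subseteq\thick_R(Y)$ forces $\thick_R(\sfr\sft X)\subseteq\thick_R(\sfr\sft Y)$ once you also use the easy inclusion), but the only instance you can legitimately feed into it is $X=M$, $Y=k$, which yields the vacuous $\thick_R(\sfr\sft M)\subseteq\Db(R)$.

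The paper's proof reduces instead to the single case $M=R$: it suffices to show $\thick_R(R)=\thick_R(K)$, because applying the exact functor $-\lotimes{R}M$ then transports that equality to any $M$. The easy half, $K\in\thick_R(R)$, is your first paragraph. The hard half, $R\in\thick_R(K)$, is exactly the content that your bootstrap from $k$ tries to avoid but cannot; the paper gets it either from Theorem~\ref{thm:hopkins-dga} (both $R$ and $K$ are perfect with support $\{(\bs z)\}$, using that $R$ is artinian local) or from an elementary Koszul-complex argument as in \cite[Lemma~6.0.9]{Hovey/Palmieri/Strickland:1997}. Your computation of $\sfr\sft k$ is correct but does not feed into a valid proof of the general statement; what you really need is a statement about the generator $R$ of the perfect complexes, not about the generator $k$ of $\Db(R)$.
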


\begin{proof}
It suffices to prove that $\thick_{R}(R) = \thick_{R}(K)$; the stated equality follows by applying $-\lotimes RM$. Since $K$ is a finite free complex, it is in $\thick_{R}(R)$, so it remains to verify that $R$ is in $\thick_{R}(K)$. This is a special case of Theorem~\ref{thm:hopkins-dga}, since $K$ and $R$ are both perfect complexes with the same support---namely, $\{(\bs z)\}$---but there is also an elementary argument, exploiting the structure of the Koszul complex; see \cite[Lemma~6.0.9]{Hovey/Palmieri/Strickland:1997}. This is precisely where the hypothesis that $R$ is zero dimensional is used. 
\end{proof}

Let $\Lambda$ be an exterior algebra over $k$ on $r$ generators $\xi_{i}$ in degree $1$, and view it as a
DG algebra with zero differential. 

\begin{lemma}
\label{lem:KtoLambda}
The assignment $\xi_{j}\mapsto \sum_{1\leqslant h\leqslant i\leqslant r}c_{hi,j}z_{h}y_{i}$ induces a morphism $\Lambda\to K$ of DG algebras that is a quasi-isomorphism.
\end{lemma}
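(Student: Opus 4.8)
The claim is that the map $\varphi\colon\Lambda\to K$ sending $\xi_j\mapsto w_j:=\sum_{h\leqslant i}c_{hi,j}z_hy_i$ is a morphism of DG algebras that is a quasi-isomorphism. The plan is to verify these two assertions separately. For the DG algebra statement, I would first check that each $w_j$ is a cycle in $K$: since $d(z_h)=0$ and $d(y_i)=z_i$, one computes $d(w_j)=\sum_{h\leqslant i}c_{hi,j}z_hz_i$, which is exactly $f_j$ by \eqref{eqn:relations}; but $f_j=0$ in $R$, hence $d(w_j)=0$ in $K$. So the $w_j$ are cycles of degree $-1$ (note $K$ uses upper grading with $|y_i|=-1$, matching $|\xi_j|=-1$ after the sign convention of this section). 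Next I must check the assignment respects the relations of $\Lambda$, i.e. that the $w_j$ anticommute and square to zero in $K$. This is automatic: the $w_j$ lie in the degree $-1$ component of the exterior $R$-algebra $K$, which is spanned by the $y_i$, and any element of an odd-degree exterior power anticommutes with itself and others, so $w_jw_l+w_lw_j=0$ and $w_j^2=0$ in $K$. Since $\Lambda$ is the free graded-commutative $k$-algebra on the $\xi_j$ modulo these relations, $\varphi$ extends uniquely to a $k$-algebra map, and it is a chain map because it sends cycles to cycles and $d^\Lambda=0$.

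For the quasi-isomorphism statement, the cleanest route is to compute $\HHH(K)$ directly and compare. Since $K$ is the Koszul complex on the regular sequence... wait, the $z_i$ are \emph{not} a regular sequence on $R$ — they generate the maximal ideal of the artinian local ring $R$. Instead I would use that $K$ is the Koszul complex on $z_1,\dots,z_r$ over the \emph{polynomial ring} $P=k[z_1,\dots,z_r]$, base-changed along $P\to R$; equivalently, $\HHH(K)$ is computed by $\mathrm{Tor}^P_\bullet(R,k)$. But more efficiently: $R$ is a complete intersection $P/(f_1,\dots,f_r)$ with the $f_j\in(\bs z)^2$, so a standard computation (the Tate/Assmus construction, or \cite[\S7]{ABIM}) identifies $\HHH(K\otimes_R k)\cong\HHH(K)$ with an exterior algebra on $r$ generators in degree $-1$ coming from the $y_i$. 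Concretely, one checks that $\HHH(K)$ is the free graded-commutative $k$-algebra on the classes $[w_j]$: the point is that modulo $(\bs z)$ the classes of the $y_i$ survive (the differential lands in $(\bs z)K$), and the $w_j$ span the same $k$-space as the $y_i$ in $\HHH^{-1}(K)$ precisely because the matrix $(\bar c_{hi,j})$... — actually the relevant nondegeneracy is built into the fact that $R$ is a \emph{complete intersection} of embedding codimension $r$, which forces $\mathrm{Tor}^P_\bullet(R,k)$ to be the exterior algebra $\Lambda_k(\HHH^{-1})$ with $\dim_k\HHH^{-1}=r$. Thus $\varphi$ induces an isomorphism in homology degree $-1$, both sides are exterior algebras generated in that degree, and the multiplicativity of $\varphi$ forces it to be an isomorphism in all degrees.

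I would structure the write-up as: (i) $d(w_j)=f_j=0$, so $\varphi$ is well-defined as a chain map; (ii) the relations hold in $K$ for degree reasons, so $\varphi$ is an algebra map; (iii) $\HHH(K)$ is an exterior algebra on $r$ classes in degree $-1$, citing the complete intersection computation, and $\HHH^{-1}(\varphi)$ is bijective, whence $\HHH(\varphi)$ is bijective. The main obstacle is step (iii): one needs the precise statement that for a codimension-$r$ complete intersection $R=P/(f_j)$ the Koszul homology $\HHH(K_R^P)$ is the full exterior algebra on a rank-$r$ space in internal degree $-1$, and that $\varphi$ hits a basis of $\HHH^{-1}$. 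The first half is classical (Tate, or \cite[(7.2)]{ABIM}); the second half amounts to observing that $[w_j]$ and $[y_j]$ agree up to elements of $(\bs z)\HHH(K)$ — but in fact $w_j\equiv 0\bmod(\bs z)$ since every term $c_{hi,j}z_hy_i$ carries a factor $z_h$! So the naive comparison fails, and one must instead argue that $K$ and $\Lambda$ are \emph{abstractly} the minimal models / that $\varphi$ is a map of DG algebras between objects with the same Poincaré series inducing a surjection (or injection) on $\mathrm{Tor}$ over $k$ — this is the delicate point, and I would handle it by passing to $k\otimes_R K$ and invoking that $\HHH(k\otimes_R K)=\mathrm{Tor}^R(k,k)$ contains $\HHH(K)$ as a subalgebra, together with the explicit description of $\mathrm{Tor}^R(k,k)$ from Remark~\ref{rem:ext}, in which the classes dual to the $\theta_j$ are represented exactly by the cycles $w_j=\sum c_{hi,j}z_hy_i$ under the standard identification of $\HHH_2$ of the Koszul complex with the quadratic relations.
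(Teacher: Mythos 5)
Your verification that $\varphi$ is a morphism of DG algebras is correct and mirrors the paper's commentary: each $w_j=\sum_{h\le i}c_{hi,j}z_hy_i$ squares to zero and the $w_j$ anticommute because they lie in the degree $-1$ (odd) component of the exterior algebra $K$, and $d(w_j)=\sum_{h\le i}c_{hi,j}z_hz_i=f_j=0$ in $K$ by \eqref{eqn:relations}, so the universal property of $\Lambda$ produces the desired DG algebra map.

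The problem is in the quasi-isomorphism step. Your first instinct is right and is the paper's actual argument: invoke Tate's theorem \cite[Theorem~6]{Ta}, which says simultaneously that $\HHH(K)$ is the exterior $k$-algebra on $\HHH^{-1}(K)$ and that the classes $[w_j]$ form a $k$-basis of $\HHH^{-1}(K)$. But you then talk yourself out of it. The worry that ``$w_j\equiv 0\bmod(\bs z)$, so the naive comparison fails'' is a red herring: $w_j\in(\bs z)K$ does \emph{not} mean $[w_j]=0$ in $\HHH(K)$. In fact \emph{every} cycle in $K^{-1}$ has all its coefficients in $(\bs z)$: if $\sum a_iz_i=0$ in $R$, then a lift to the polynomial ring $P=k[z_1,\dots,z_r]$ lies in $(f_1,\dots,f_r)\subseteq(\bs z)^2$, so the degree-one part $\sum\alpha_iz_i$ (with $\alpha_i\in k$ the constant parts of the $a_i$) must vanish, forcing $\alpha_i=0$. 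So one cannot detect $\HHH^{-1}(K)$ by reducing modulo $(\bs z)$, and this is simply how Koszul homology of an artinian ring behaves, not a gap in the cited result.

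Worse, the proposed workaround via $\operatorname{Tor}^R(k,k)$ is incorrect as stated. The complex $K$ is not a free resolution of $k$ over $R$ (the sequence $z_1,\dots,z_r$ is not regular on the artinian ring $R$), so $\HHH(k\otimes_RK)\ne\operatorname{Tor}^R(k,k)$. Indeed $k\otimes_RK$ is the rank-$r$ exterior $k$-algebra with zero differential, and the natural map $\HHH(K)\to\HHH(k\otimes_RK)$ sends each $[w_j]$ to zero, precisely because $w_j\in(\bs z)K$; so this route sees none of the structure you need. If you want to exhibit the basis rather than just cite Tate, the clean comparison is this: $\HHH(K)\cong\operatorname{Tor}^P(R,k)$, which one computes from the $P$-free resolution $P\langle e_1,\dots,e_r\mid d(e_j)=f_j\rangle\to R$ to be the exterior algebra $\Lambda_k(e_1,\dots,e_r)$ (the induced differential vanishes since $f_j\in(\bs z)$). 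In the tensor product of the two Koszul complexes over $P$, the element $e_j-w_j$ is a cycle (both $e_j$ and $w_j$ have differential $f_j$), and tracing the comparison quasi-isomorphisms identifies $[w_j]\in\HHH^{-1}(K)$ with $[e_j]$. That is exactly the content of the reference the paper gives, and citing it directly is the intended proof.
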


\begin{proof}[Commentary in lieu of a proof]
The said assignment induces a map $\Phi\col \Lambda\to K$ of graded algebras, by the universal properties of exterior algebras; note that the graded algebra underlying $K$ is strictly graded-commutative, for that too is an exterior algebra.  Given \eqref{eqn:relations}, it is clear that $\sum_{h,i}c_{hi,j}z_{h}y_{i}$ is a cycle in $K$ for each $j$, so $\Phi$ is a morphism of DG algebras. One can complete the proof by invoking the fact that these cycles form a basis for the $k$-vectorspace $\HHH_{1}(K)$ and that $\HHH(K)$ is the exterior algebra on $\HHH_{1}(K)$; see \cite[Theorem~6]{Ta}.

For a proof specific to the case of Example~\ref{exm:ke}, see \cite[Lemma~7.1]{BIK3}.
\end{proof}

The quasi-isomorphism $\Lambda\to K$ of DG algebras from Lemma~\ref{lem:KtoLambda} induces an equivalence $\sfi\col \Db(K)\to \Db(\Lambda)$ of triangulated categories; see \cite[3.6]{ABIM}. This, along with the functors $\sft$ and $\sfr$ from Construction~\ref{con:koszul} makes for a bridge from $R$ to $\Lambda$:
\begin{equation}
\label{eq:bridge}
\xymatrixcolsep{2pc}
\xymatrix{ 
\Db(R) \ar@<+1ex>[rr]^-{\sft} && \Db(K) \ar@<+1ex>[ll]^-{\sfr} \ar@{->}[rr]_{\equiv}^{\sfi} && \Db(\Lambda)}
\end{equation}

The morphisms $R\to K\xla{\simeq}\Lambda$ of DG algebras induce a homomorphism of $k$-algebras $\Ext^{*}_{\Lambda}(k,k)\to \Ext^{*}_{R}(k,k)$. This map is one-to-one, and its image is precisely the subalgebra $k[\bs\theta]$. In this way, we identify $V_{R}(k)$ and $V_{\Lambda}(k)$. 

\begin{prop}
\label{prop:vr}
There is an equality $V_{R}(M) = V_{\Lambda}(\sfi\sft M)$ for each $M$ in $\Db(R)$.
\end{prop}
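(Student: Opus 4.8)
The plan is to compute both sides in terms of Ext-modules and show they carry the same support via the bridge \eqref{eq:bridge}. By Definition~\ref{defi:civariety}, $V_R(M) = \Supp^*_{k[\bs\theta]}\Ext_R(k,M)$, and by the definition of $V_\Lambda$ together with Proposition~\ref{prop:propF}, $V_\Lambda(\sfi\sft M) = \Supp^*_S\,\Ext_\Lambda(k,\sfi\sft M)$, where $S = k[x_1,\dots,x_r]$ with $|x_i| = 2$. Under the identification of the polynomial subalgebra $k[\bs\theta] \subseteq \Ext_R(k,k)$ with $\Ext_\Lambda(k,k) \cong S$ (noted just before the proposition, coming from $R \to K \xla{\simeq} \Lambda$), both supports live in the same $\spec S = \spec k[\bs\theta]$. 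So the task reduces to comparing $\Ext_R(k,M)$ and $\Ext_\Lambda(k,\sfi\sft M)$ as modules over this polynomial ring, at least up to a set of primes that doesn't change the support.

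First I would identify $\Ext_\Lambda(k,\sfi\sft M)$ with $\Ext_K(k, \sft M)$ using the equivalence $\sfi$ induced by the quasi-isomorphism $\Lambda \to K$; this is a straightforward transport along Lemma~\ref{lem:KtoLambda} and \cite[3.6]{ABIM}, and it respects the module structures over $\Ext_\Lambda(k,k) \cong \Ext_K(k,k)$. Next, since $\sft = K \otimes_R -$ and $K \to R$ (the augmentation $y_i \mapsto 0$) is... more precisely, since $K$ is the Koszul complex resolving $R/(\bs z) = k$ over $R$, I would use the standard change-of-rings/adjunction computation: $\Ext_K(k, K\otimes_R M) \cong \Ext_R(k, M)$ as graded $k$-vector spaces, compatibly with the action of $\Ext_K(k,k)$, where $\Ext_K(k,k)$ acts on the right-hand side through the algebra map $\Ext_K(k,k) \to \Ext_R(k,k)$ induced by $R \to K$. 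Concretely: a semifree resolution $P \to k$ over $R$ gives $K \otimes_R P \to K \otimes_R k = K \to k$... one has to be a little careful, but the cleanest route is that $\sfr\sft M$ and $M$ generate the same thick subcategory by Proposition~\ref{prop:thickk}, hence $\Ext_R(k,-)$ applied to them has support determined by the same data; combined with the adjunction $\Hom_K(\sft M, -) \cong \Hom_R(M, \sfr(-))$ this pins down $\Ext_K(k,\sft M) \cong \Ext_R(\sfr\sft M \otimes \text{(something)}, k)$. The honest computation is: $\RHom_K(k, \sft M) \simeq \RHom_K(k, K \lotimes{R} M) \simeq \RHom_R(k, M)$ where the last equivalence uses that $k$, viewed over $K$ via restriction along $R \to K$, restricts to $k$ over $R$, and $\RHom$ commutes appropriately — this is the projection formula plus $\sfr k \simeq k$.

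The key steps in order: (1) reduce $V_\Lambda(\sfi\sft M)$ to $\Supp^*_S \Ext_K(k, \sft M)$ via the equivalence $\sfi$; (2) establish a natural isomorphism $\Ext_K(k, \sft M) \cong \Ext_R(k, M)$ of graded modules over $\Ext_K(k,k) \to \Ext_R(k,k)$, using the adjunction between $\sft$ and $\sfr$ and the fact that $\sfr k \simeq k$; (3) observe that the $k[\bs\theta]$-action on $\Ext_R(k,M)$ factors through exactly this map, so the two module structures agree under the identification $\Ext_\Lambda(k,k) \cong k[\bs\theta]$; (4) conclude $\Supp^*_S \Ext_\Lambda(k,\sfi\sft M) = \Supp^*_{k[\bs\theta]} \Ext_R(k,M)$, which is the assertion.

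The main obstacle will be step (2): getting the module structure — not just the underlying graded vector space isomorphism — correct, and in particular checking that the $\Ext_K(k,k)$-action on $\Ext_K(k,\sft M)$ matches the $\Ext_R(k,k)$-action on $\Ext_R(k,M)$ restricted along $\Ext_K(k,k) \to \Ext_R(k,k)$. The cleanest way to handle this is via the adjunction $\RHom_K(\sft M, N) \simeq \RHom_R(M, \sfr N)$ applied with $N$ a $K$-semifree resolution of $k$: both sides are modules over $\RHom_K(k,k)$ acting by precomposition, and one checks the action on the right-hand side is the one induced by $R \to K$ because $\sfr$ sends the $K$-module $k$ to the $R$-module $k$ and is compatible with the multiplicative structure. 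Alternatively one can sidestep the subtlety by invoking Proposition~\ref{prop:thickk} together with the already-proved Theorem~\ref{thm:hopkins-lambda} and Theorem~\ref{thm:hopkins-ci}'s predecessors, but the direct Ext-computation is shorter once the bookkeeping is set up. I would note that since the ambient ring $k[\bs\theta] = S$ is noetherian and all modules in sight are finitely generated by Lemma~\ref{lem:thickk}, the supports are closed and the equality of modules up to finite length pieces would already suffice — but in fact the isomorphism in step (2) is on the nose, so no such slack is needed.
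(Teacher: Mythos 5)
Your overall strategy matches the paper's: pass through the bridge \eqref{eq:bridge}, identify both sides with $\Ext$-modules over $k[\bs\theta]\cong\Ext_\Lambda(k,k)$, and check compatibility of module structures. The paper's own proof is a one-liner ("adjunction gives an isomorphism $\Ext_\Lambda(k,\sfi\sft M)\cong\Ext_R(k,M)$, compatible with $\Ext_\Lambda(k,k)\to\Ext_R(k,k)$"), and you have correctly located both the isomorphism to be established and the subtlety about module structures.

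However, there is a genuine gap in step (2). The adjunction $\sft\dashv\sfr$ gives $\RHom_K(\sft X,Y)\simeq\RHom_R(X,\sfr Y)$, which computes $\RHom_K$ \emph{out of} $\sft M$, not \emph{into} it; it does not yield $\RHom_K(k,\sft M)\simeq\RHom_R(k,M)$. Nor does "the projection formula plus $\sfr k\simeq k$": the projection formula relates $\sfr(Y)\lotimes RX$ to $\sfr(Y\lotimes K\sft X)$, and in any case $\sfr\sft M=K\otimes_RM\simeq k\lotimes RM$, which is \emph{not} quasi-isomorphic to $M$ (it computes $\Tor^R(k,M)$), so one cannot "commute $\RHom$" past $\sfr$ as you suggest. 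What is actually needed is the \emph{right} adjoint $\Hom_R(K,-)$ of $\sfr$, giving $\RHom_K(k,\Hom_R(K,M))\simeq\RHom_R(\sfr k,M)=\RHom_R(k,M)$, combined with the self-duality of the Koszul complex: $\Hom_R(K,R)\cong\Sigma^{-r}K$ as DG $K$-modules, so $\Hom_R(K,M)\cong\Sigma^{-r}\sft M$. Put differently, $\sft$ is both left and right adjoint to $\sfr$ up to a shift, and it is the right-adjointness that is used here; the shift is harmless for supports. This self-duality of $K$ is the missing ingredient, and once it is inserted your step (2) goes through; the compatibility of actions you worry about is then visible on the level of the adjunction isomorphism, since $\Ext_K(k,k)$ acts on both sides by precomposition through the same resolutions.
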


\begin{proof}
Adjunction gives an isomorphism $\Ext_{\Lambda}(k,\sfi\sft M)\cong \Ext_{R}(k,M)$ and this is compatible with the homomorphism of $k$-algebras $\Ext_{\Lambda}(k,k)\to \Ext_{R}(k,k)$.
\end{proof}

\begin{proof}[Proof of Theorem~\emph{\ref{thm:hopkins-ci}}] The hypothesis translates to  $V_{\Lambda}(\sfi\sft M)\subseteq V_{\Lambda}(\sfi\sft N)$, by Proposition~\ref{prop:vr}, so  Theorem~\ref{thm:hopkins-lambda} yields that $\sfi\sft M$ is in $\thick_{\Lambda}(\sfi\sft N)$. Since $\sfi$ is an equivalence, this gives that $\sft M$ is in $\thick_{K}(\sft N)$, and hence the inclusion below:
\[
\thick_{R}(M) = \thick_{R}(\sfr\sft M)\subseteq \thick_{R}(\sfr\sft N) = \thick_{R}(N)\,.
\]
The equalities are from Proposition~\ref{prop:thickk}. 
\end{proof}

\begin{rem}
\label{rem:cihistory}
Theorem~\ref{thm:hopkins-ci} carries over to general artinian complete intersection rings, and the proof is identical, except for one step, Lemma~\ref{lem:KtoLambda}: there may be no morphisms between the DGA algebras $K$ and $\Lambda$ for the ring $R$ may not contain its residue field as a subring. However, they are still linked by a chain quasi-isomorphisms (see, for example, \cite[Lemma~6.4]{AI}) and that is all that is needed.

A solution to the realizability problem for $\Db(R)$ is contained in Bergh~\cite[Corollary 2.3]{Be}; see also \cite[Theorem 7.8]{AI2}: For any closed subset $V\subseteq V_R(k)$ there exists a complex $M$ in $\Db(R)$ with the property that $V_R(M) = V$. For another proof, closer to the spirit of this paper, see Avramov and Jorgensen~\cite{AJ}. From these and Theorem~\ref{thm:hopkins-ci}, one obtains  a bijection between the thick subcategories of $\Db(R)$ and specialization closed subsets of $V_{R}(k)$, for any artinian complete intersection ring $R$; see the proof of Corollary~\ref{cor:thick-cdga}.

Stevenson~\cite[Theorem~10.5]{St} has established an analogous classification for any local complete intersection ring; see also \cite{Iy}.
\end{rem}

\section{Group algebras}
\label{sec:kg}
The central result in this section is an extension of Hopkins' Theorem to group algebras. Throughout we assume that $G$ is any finite group and that $k$ is a field of characteristic $p>0$ dividing $|G|$. Keeping in line with notation in previous sections, we write $\Db(kG)$ for the bounded derived category of finitely generated $kG$-modules. This is a tensor triangulated category, where the tensor product $M\otimes N$ of complexes $M$ and $N$ is the complex $M\otimes_{k} N$ with the diagonal $G$-action. 

Given a subgroup $E$ of $G$ and a complex $M$ of $kG$-modules, we write $M\da GE$ for the complex of $kE$-modules obtained from $M$ by restriction along the inclusion $kE\subseteq kG$. Given a complex $N$ of $kE$-modules, we write $N\ua EG$ for complex $kG\otimes_{kE}N$ of $kG$-modules induced from $N$.
 
Our main tool is the following result from \cite{C}, see also \cite[Theorem 8.2.7]{CTVZ}, that leads to a connection between thick subcategories of the bounded derived category of $kG$ with those of the group algebras of the elementary abelian $p$-subgroups of $G$. 

\begin{thm} 
\label{thm:filter}
There exists a $kG$-module $V$ and a filtration 
\[ 
\{0\} \ = M_0 \subseteq M_1 \subseteq \dots \subseteq M_t  \ = \ k \oplus V
\]
where for every $i= 1, \dots, t$, there is an elementary abelian $p$-subgroup $E_i \subseteq G$ and a finitely generated $kE_i$-module $W_i$ such that $M_i/M_{i-1} \cong (W_i)\ua {E_{i}}G$.  \qed
\end{thm}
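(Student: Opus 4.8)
The plan is to argue by induction on $|G|$. If $G$ is not a $p$-group and $P\leq G$ is a Sylow $p$-subgroup, the composite $k\to k\ua{P}{G}\to k$ of the unit and counit of the (restriction, induction) adjunction is multiplication by $[G:P]$, a unit in $k$, so $k$ is a direct summand of $k\ua{P}{G}$; applying the inductive hypothesis to $P$ and inducing the resulting filtration of $k\oplus V'$ up to $G$ produces a filtration of $k\ua{P}{G}\oplus(V')\ua{P}{G}$ with subquotients induced from elementary abelian subgroups of $P$, hence of $G$, and writing $k\ua{P}{G}=k\oplus V''$ gives the statement for $G$ with $V=V''\oplus(V')\ua{P}{G}$. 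The base case, $G$ elementary abelian, is the trivial filtration with $t=1$, $E_1=G$, $W_1=k$, $V=0$. So it remains to treat a $p$-group $G$ that is not elementary abelian.

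Here the cohomological input is Serre's theorem: there exist nonzero classes $u_1,\dots,u_n\in\HH{1}{G}{\bfp}$ with $\zeta_1\cdots\zeta_n=0$ in $\HH{*}{G}{k}$, where $\zeta_i=u_i$ if $p=2$ and $\zeta_i=\beta(u_i)$ (a Bockstein) if $p$ is odd, so that $\deg\zeta_i\in\{1,2\}$. Each $u_i\colon G\to\bZ/p$ has for kernel a maximal subgroup $H_i$ of $G$, and because $G$ is not elementary abelian, every elementary abelian subgroup of $G$ is proper, hence contained in some $H_i$.

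For each $i$ I would form the Carlson module $L_{\zeta_i}$, sitting in an exact sequence $0\to L_{\zeta_i}\to\Omega^{\deg\zeta_i}(k)\xrightarrow{\ \zeta_i\ }k\to0$ of $kG$-modules. Since $\zeta_i$ is inflated from $G/H_i$ and restricts to $0$ on $H_i$, the module $L_{\zeta_i}$ is governed by $H_i$: up to a projective summand it is induced from $H_i$. Now $|H_i|<|G|$, so the inductive hypothesis applies to $H_i$; moreover, once the theorem is known for the trivial module over $kH$ it holds for \emph{every} finitely generated $kH$-module $M$ — tensor a filtration of $k\oplus V$ with $M$ and use the projection formula $M\otimes_k(W\ua{E}{H})\cong(M\da{H}{E}\otimes_kW)\ua{E}{H}$. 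Using in addition that $\Omega$ commutes with induction up to projective summands and that projective $kG$-modules are induced from the trivial subgroup, one concludes that every Heller translate of every $L_{\zeta_i}$ is a direct summand of a $kG$-module admitting a filtration by modules induced from elementary abelian subgroups.

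To finish, $\zeta_1\cdots\zeta_n=0$ says that the composite $k\xrightarrow{\ \zeta_1\ }\Omega^{-\deg\zeta_1}(k)\to\cdots\to\Omega^{-N}(k)$, with $N=\sum_i\deg\zeta_i$, is the zero map in the stable category; iterated application of the octahedral axiom then exhibits $\Omega^{-N}(k)$, up to a Heller translate of $k$ as a direct summand, as a successive extension of Heller translates of the $L_{\zeta_i}$. Each of the latter is a summand of a module filtered by induced modules, hence so is $\Omega^{-N}(k)$, and applying $\Omega^{N}$ shows that $k$ itself is a summand of a module filtered by modules induced from elementary abelian subgroups, which is the assertion, with $V$ the complementary summand. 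The step I expect to be the main obstacle is the claim that $L_{\zeta_i}$ is, up to a projective summand and a Heller shift, induced from the index-$p$ subgroup $H_i$: this is where the structure of maximal subgroups of a $p$-group and the inflation origin of the $\zeta_i$ must be exploited, and it is also the delicate point at which one has to verify that the construction yields a genuine module filtration rather than only the analogous statement in the stable or bounded derived category. Tracking the accumulating complementary summands $V$ along the induction is a bookkeeping matter only.
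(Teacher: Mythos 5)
The paper does not prove this result; it quotes it from \cite{C} (see also \cite[Theorem 8.2.7]{CTVZ}), so the comparison is with Carlson's published argument. Your overall strategy --- Sylow reduction, Serre's theorem producing $\zeta_1,\dots,\zeta_n$ with $\zeta_1\cdots\zeta_n=0$ attached to maximal subgroups $H_1,\dots,H_n$, induction on $|G|$, and the projection-formula bootstrap from $k$ to arbitrary $kH$-modules --- is indeed the one used there.

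One intermediate claim is false as stated: that $L_{\zeta_i}$ is, up to a projective summand and a Heller shift, induced from $H_i$. For $p=2$ this is correct: comparing $0\to\Omega(k)\to kG\to k\to 0$ with the inflated sequence $0\to k\to k\ua{H}{G}\to k\to 0$ representing $\zeta=u$ identifies $L_{\zeta}$ with $\ker\bigl(kH\ua{H}{G}\to k\ua{H}{G}\bigr)\cong\Omega_{H}(k)\ua{H}{G}$. But for $p$ odd, $\zeta=\beta(u)$ has degree $2$, the inflated sequence is $0\to k\to k\ua{H}{G}\xrightarrow{\ \phi\ }k\ua{H}{G}\to k\to 0$, and factoring $\zeta$ through $\operatorname{im}\phi$ and applying the octahedral axiom shows that $L_{\zeta}$ sits in a triangle between $\Omega^{2}(k\ua{H}{G})$ and $\Omega(k\ua{H}{G})$ whose connecting map is a Heller shift of $\phi$, which is generically stably nonzero. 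So $L_{\zeta}$ is a genuine two-step extension of modules induced from $H$, not itself induced up to projectives; this still suffices to run your argument, but the lemma must be weakened to say that each $L_{\zeta_i}$ admits, up to projective summands, a filtration by Heller shifts of modules induced from $H_i$. The ``delicate point'' you flag is real but manageable: every triangle in the stable category lifts to a short exact sequence of modules after adjusting the middle term by a projective summand, and projectives are induced from the trivial (elementary abelian) subgroup, so they can be absorbed into the filtration. Carlson's proof sidesteps this translation entirely by splicing the inflated two- and four-term exact sequences of modules directly, never passing through the stable category, which is why it lands on the literal statement about a module filtration without further bookkeeping.
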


In \cite[Corollary 2.4]{C} this result was used to prove that the stable category $\stmod(kG)$ is generated by modules induced from elementary abelian $p$-subgroups of $G$. The same is true for the bounded derived category.

\begin{cor} 
\label{gens-kg}
With $E_1, \dots, E_t$ the elementary abelian p-subgroups of $G$ from Theorem~\ref{thm:filter}, there is an equality
\[
\Db(kG) = \thick_{G}(\bigoplus_{i=1}^t k\ua{E_i}G)\,.
\]
\end{cor}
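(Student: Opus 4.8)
The plan is to deduce this from Theorem~\ref{thm:filter} by a straightforward thick-subcategory argument, mirroring the proof of \cite[Corollary~2.4]{C} for the stable category. First I would observe that the inclusion $\thick_{G}(\bigoplus_{i}k\ua{E_{i}}G)\subseteq\Db(kG)$ is automatic, since each $k\ua{E_{i}}G$ is a finitely generated $kG$-module, hence an object of $\Db(kG)$. For the reverse inclusion, it suffices to show that every finitely generated $kG$-module $M$ lies in $\CatC:=\thick_{G}(\bigoplus_{i}k\ua{E_{i}}G)$: indeed, $\Db(kG)$ is generated as a thick subcategory by the modules (concentrated in degree zero), because any bounded complex is built from its cohomology modules by finitely many triangles (or one filters by brutal truncations).

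Next I would bring in the module $V$ and the filtration $\{0\}=M_{0}\subseteq M_{1}\subseteq\dots\subseteq M_{t}=k\oplus V$ from Theorem~\ref{thm:filter}. Each subquotient $M_{i}/M_{i-1}\cong (W_{i})\ua{E_{i}}G$ with $W_{i}$ a finitely generated $kE_{i}$-module. Now $W_{i}\in\Db(kE_{i})=\thick_{E_{i}}(k)$ by Lemma~\ref{lem:thickk} applied to $R=kE_{i}$ (an artinian complete intersection, as recalled in Example~\ref{exm:ke}); since induction $(-)\ua{E_{i}}G\col\Db(kE_{i})\to\Db(kG)$ is an exact functor, it carries $\thick_{E_{i}}(k)$ into $\thick_{G}(k\ua{E_{i}}G)\subseteq\CatC$. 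Hence each $(W_{i})\ua{E_{i}}G$ lies in $\CatC$. Walking up the filtration via the triangles $M_{i-1}\to M_{i}\to M_{i}/M_{i-1}\to$ and using that $\CatC$ is thick, one concludes $k\oplus V=M_{t}\in\CatC$, and therefore $k\in\CatC$ as a direct summand.

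It then remains to pass from $k$ being in $\CatC$ to an arbitrary finitely generated $kG$-module $M$ being in $\CatC$. Here I would use the tensor-triangulated structure of $\Db(kG)$: the subcategory $\thick_{G}(k)$ is a tensor ideal, since $k$ is the unit, so $M\cong M\otimes k$ lies in $\thick_{G}(M\otimes k)\subseteq$ the thick subcategory generated by $M\otimes(\text{anything in }\thick_{G}(k))$. More precisely, tensoring the containment $k\in\CatC$ with $M$ and using that $M\otimes(k\ua{E_{i}}G)\cong (M\da GE_{i})\ua{E_{i}}G$ by the projection formula, together with Lemma~\ref{lem:thickk} over $kE_{i}$ (so that $M\da GE_{i}\in\thick_{E_{i}}(k)$ and hence $(M\da GE_{i})\ua{E_{i}}G\in\thick_{G}(k\ua{E_{i}}G)$), shows $M\in\CatC$. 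This finishes the proof.

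The main obstacle is not any single hard step but getting the bookkeeping right: one must be careful that all the functors in play (induction, restriction, tensoring) are exact on the bounded derived categories and preserve the relevant thick subcategories, and that Lemma~\ref{lem:thickk} legitimately applies to each $kE_{i}$. The projection-formula isomorphism $M\otimes(N\ua EG)\cong(M\da GE\otimes N)\ua EG$ is the one genuinely non-formal input, but it is standard (it is the derived version of Frobenius reciprocity) and requires no new idea.
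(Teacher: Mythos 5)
Your argument is correct and rests on exactly the same ingredients as the paper's proof: the filtration of $k\oplus V$ from Theorem~\ref{thm:filter}, Frobenius reciprocity, and Lemma~\ref{lem:thickk} applied over each $kE_i$. The only real difference is organizational. The paper tensors an arbitrary complex $C\in\Db(kG)$ with the filtration in a single pass, reads off the subquotients as $(C\da G{E_i}\otimes W_i)\ua{E_i}G$, and applies Lemma~\ref{lem:thickk} once to the complexes $C\da G{E_i}\otimes W_i$. You instead take two passes: first walk up the filtration of $k\oplus V$ itself to get $k\in\CatC$ (using Lemma~\ref{lem:thickk} on $W_i$), then tensor that containment with $M$ (using Lemma~\ref{lem:thickk} a second time on $M\da G{E_i}$). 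Both routes are fine; the one-pass version also makes your preliminary reduction from bounded complexes to modules unnecessary, since the tensored filtration argument applies to any $C\in\Db(kG)$ directly.

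One misstatement to flag, even though it does not enter your actual argument: the parenthetical claim that $\thick_G(k)$ is a tensor ideal ``since $k$ is the unit'' is false in general. If $\thick_G(k)$ were a tensor ideal then tensoring with $k$ would show $\Db(kG)=\thick_G(k)$ for every $G$, which fails as soon as $kG$ has more than one block (and indeed the paper notes that $\Db(kG)=\thick_G(k)$ holds, e.g., when $G$ is a $p$-group, implicitly acknowledging that it can fail). Fortunately the sentence beginning ``More precisely'' is the argument you actually use, and that one is sound: tensoring the thick-subcategory membership $k\in\thick_G(\bigoplus_i k\ua{E_i}G)$ with $M$, identifying $M\otimes k\ua{E_i}G\cong(M\da G{E_i})\ua{E_i}G$ by Frobenius reciprocity, and then invoking $M\da G{E_i}\in\thick_{E_i}(k)$ is exactly right.
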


\begin{proof}
Any complex $C$ of $kG$-modules is a direct summand of $C \otimes (k \oplus V)$, which has a filtration by submodules $C \otimes M_i$ with quotients
\[
(C \otimes M_i)/(C \otimes M_{i-1}) \ \cong \ C \otimes (M_i/M_{i-1}) \ \cong \ 
C \otimes (W_i)\ua {E_{i}}G  \ \cong \ 
(C\da G{E_i}\otimes W_i ){\ua {E_i}G}\,,
\] 
the last isomorphism being Frobenius reciprocity. Thus $C$ is in the thick subcategory generated by the $(C\da G{E_i}\otimes W_i ){\ua {E_i}G}$.

It remains to note that when $C$ is in $\Db(G)$, the complex $C{\da G{E_i}}\otimes W_{i}$ is in $\Db(E_{i})$, and hence in $\thick_{E}(k)$; see Lemma~\ref{lem:thickk}.
\end{proof}

We recall a construction of cohomological varieties for complexes of $kG$-modules. 

The cohomology algebra $\HHH^{*}(G,k)$ of $G$, that is to say the $k$-algebra $\Ext^{*}_{kG}(k,k)$, is graded commutative and finitely generated, and hence noetherian. As usual, $\HHH^{\bullet}(G)$ denotes $\oplus_{i\geqslant 0}\HHH^{2i}(G,k)$ when $p$ is odd, and all of $\HHH^{*}(G,k)$ when $p=2$. In any case, $\HHH^{\bullet}(G)$ is a commutative noetherian subalgebra of $\HHH^{*}(G,k)$.

For any complex $M$ of $kG$-modules, there is a homomorphism of $k$-algebras 
\[
\HHH^{*}(G,k)\to\Ext^{*}_{G}(M,M)\,,
\]
so  $\Ext^{*}_{G}(M,M)$ is endowed with a structure of a module over $\HHH^{\bullet}(G)$; it is a finitely generated module when $M$ is in $\Db(kG)$; see, for example, \cite[Theorem 6.5.1]{CTVZ}. We note that the result in \cite{CTVZ} are stated for $kG$-modules, but the arguments are easily adapted to apply to any complex in $\Db(kG)$. Alternatively, one can deduce the result for complexes from the one for modules using the fact that any $M\in\Db(kG)$ is in the thick subcategory generated by its total homology module, $\HHH(M)$.

\begin{defi}
\label{defi:Gvariety}
The support variety of any complex $M\in\Db(kG)$ is the subset
\[
V_{G}(M) = \Supp_{\HHH^{\bullet}(G)} \Ext^{*}_{G}(M,M)\subseteq \spec\, \HHH^{\bullet}(G)\,.
\]
\end{defi}
Since $V_{G}(k)=\spec\, \HHH^{\bullet}(G)$ the support of $M$ may be viewed as a subset of $V_{G}(k)$.

\begin{rem}
We need support varieties for objects in $\Db(kG)$ and so have to work with the affine variety $\spec\, \HHH^{\bullet}(G,k)$. If one is interested only in $\stmod(kG)$, then one could consider instead the corresponding projective varieties.
\end{rem}

\begin{rem}
\label{rem:Evariety}
Let $E$ be an elementary abelian $p$-group $E$ of rank $r$; its group algebra is then isomorphic to the complete intersection $R=k[z_{1},\dots,z_{r}]/(z_{1}^{p},\dots,z_{r}^{p})$. For any $M$ in $\Db(kE)$, the two notions of support varieties of $M$ in Definitions~\ref{defi:Gvariety} and \ref{defi:civariety} are naturally isomorphic:
\[
V_{E}(M) \cong V_{R}(M)\,.
\]
Indeed, to begin with, since $E$ is a $p$-group, there is an equality of supports:
\[
V_{E}(M) = \Supp_{\HHH^{\bullet}(E)} \Ext^{*}_{E}(k,M)
\]
Now let $k[\bs\theta]$ be as in  Remark~\ref{rem:ext}. One has inclusions of $k$-subalgebras $k[\bs\theta]\subseteq \HHH^{\bullet}(E)$, and the $k[\bs\theta]$ action on $\Ext^{*}(k,M)$ factors through this inclusion. The induced map
\[
\Supp_{\HHH^{\bullet}(E)} \Ext^{*}_{E}(k,M) \xra{\ \cong\ } \Supp_{k[\bs\theta]}\Ext^{*}_{E}(k,M)=V_{R}(M)
\]
is an isomorphism; see \cite[Theorem~7.1]{Av}.
\end{rem}

A thick subcategory $\CatC$ of $\Db(kG)$ is \emph{tensor ideal} provided $M \otimes N$ is in $\CatC$ whenever one of $M$ or $N$ is in $\CatC$.  We write $\thick_{G}^\otimes(N)$ for the tensor ideal thick subcategory of $\Db(kG)$ generated by a complex $N$. 

If $\Db(kG) = \thick_{G}(k)$, then we have that $\thick_{G}^{\otimes}(M) = \thick(M)$ for any complex $M$ of $kG$-modules. This happens, for example, if $G$ is a $p$-group.

With this background we get a version of Hopkins' Theorem for group algebras.

\begin{thm}
\label{thm:hopkins-kg} 
Let $G$ be a finite group and $k$ a field of characteristic $p>0$. If $M,N$ are complexes in $\Db(kG)$ with $V_G(M) \subseteq V_G(N)$, then $M$ is in $\thick_{G}^\otimes(N)$.
\end{thm}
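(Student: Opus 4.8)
The plan is to reduce the statement about an arbitrary finite group $G$ to the case of elementary abelian $p$-subgroups, where Theorem~\ref{thm:hopkins-ci} (via Remark~\ref{rem:Evariety}) applies, using the generation result of Corollary~\ref{gens-kg} and the restriction/induction machinery. First I would recall the basic compatibility of support varieties with restriction and induction: for an elementary abelian $p$-subgroup $E\subseteq G$ the restriction map $\HHH^{\bullet}(G)\to\HHH^{\bullet}(E)$ induces a map $\res^{*}_{G,E}\col \spec\HHH^{\bullet}(E)\to\spec\HHH^{\bullet}(G)$, and one has $V_{G}(M\da GE)=(\res^{*}_{G,E})^{-1}(V_{G}(M))$ for $M$ in $\Db(kG)$, together with the fact (Quillen stratification, or more elementarily the fact that $\HHH^{\bullet}(G)$ is a finitely generated module over the image of $\prod_{E}\HHH^{\bullet}(E)$, the product taken over elementary abelian $p$-subgroups) that $V_{G}(M)\subseteq V_{G}(N)$ holds if and only if $V_{E}(M\da GE)\subseteq V_{E}(N\da GE)$ for every elementary abelian $p$-subgroup $E$ of $G$. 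This last equivalence is the crucial point that converts the hypothesis on $G$ into hypotheses on each $E$.

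Next I would set up the reduction to elementary abelian subgroups at the level of thick subcategories. By Corollary~\ref{gens-kg}, writing $E_{1},\dots,E_{t}$ for the elementary abelian $p$-subgroups furnished by Theorem~\ref{thm:filter}, we have $\Db(kG)=\thick_{G}(\bigoplus_{i}k\ua{E_{i}}G)$; in particular any $M$ in $\Db(kG)$ satisfies $M\in\thick_{G}\big(\bigoplus_{i}(M\da G{E_{i}})\ua{E_{i}}G\big)$, as the proof of that corollary shows (take $C=M$ there). So it suffices to prove that each $(M\da G{E_{i}})\ua{E_{i}}G$ lies in $\thick_{G}^{\otimes}(N)$. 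Fix $E=E_{i}$ and write $M_{E}=M\da GE$, $N_{E}=N\da GE$. The hypothesis $V_{G}(M)\subseteq V_{G}(N)$ together with the restriction formula above gives $V_{E}(M_{E})\subseteq V_{E}(N_{E})$. Now invoke Theorem~\ref{thm:hopkins-ci} for the complete intersection ring $R=kE$ (using Remark~\ref{rem:Evariety} to identify $V_{E}$ with $V_{R}$): since $\thick_{E}(M_{E})=\thick_{E}^{\otimes}(M_{E})$ because $E$ is a $p$-group, we conclude $M_{E}\in\thick_{E}^{\otimes}(N_{E})$.

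The final step is to push this back up to $G$ and make the conclusion tensor ideal. Applying induction $(-)\ua EG$, which is exact and sends thick subcategories into thick subcategories, $M_{E}\in\thick_{E}^{\otimes}(N_{E})$ gives $M_{E}\ua EG\in\thick_{G}\big((N_{E}\otimes_{k}L)\ua EG\,:\,L\in\Db(kE)\big)$; by Frobenius reciprocity $(N_{E}\otimes_{k}L)\ua EG\cong N\otimes_{k}(L\ua EG)$, which is a tensor multiple of $N$ and hence lies in $\thick_{G}^{\otimes}(N)$. Therefore $M_{E}\ua EG=(M\da G{E})\ua{E}G$ lies in $\thick_{G}^{\otimes}(N)$ for each of the subgroups $E=E_{i}$, and by the first paragraph of this step $M$ itself lies in $\thick_{G}^{\otimes}(N)$. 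The main obstacle I expect is the clean verification of the restriction formula $V_{G}(M\da GE)=(\res^{*}_{G,E})^{-1}V_{G}(M)$ and the reduction $V_{G}(M)\subseteq V_{G}(N)\iff\forall E\ V_{E}(M\da GE)\subseteq V_{E}(N\da GE)$ for complexes rather than modules; this is standard for modules (it rests on Quillen's theorem that $V_{G}(k)$ is covered by the images of the $V_{E}(k)$), and one has to note it carries over to $\Db(kG)$ either by the same arguments or by using that every object of $\Db(kG)$ lies in the thick subcategory generated by its total homology. Everything else is a formal manipulation of the adjoint functors $\sft$-type restriction/induction and the already-established Theorem~\ref{thm:hopkins-ci}.
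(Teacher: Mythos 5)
Your proposal is correct and follows essentially the same route as the paper: restrict to each elementary abelian $p$-subgroup $E$ via the Subgroup Restriction Theorem to obtain $V_E(M\da GE)\subseteq V_E(N\da GE)$, apply Theorem~\ref{thm:hopkins-ci} together with Remark~\ref{rem:Evariety} to get $M\da GE\in\thick_E(N\da GE)$, induce back up to $G$ using Frobenius reciprocity, and assemble the pieces with Corollary~\ref{gens-kg}. The only inessential difference is that you invoke Quillen stratification to phrase the reduction as an equivalence of containments, whereas the paper only needs the one-directional implication that follows directly from the restriction formula $V_E(M\da GE)=(\res_{G,E}^*)^{-1}V_G(M)$.
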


\begin{proof}
Let $E$ be an elementary abelian $p$-subgroup of $G$. The hypothesis then yields the inclusion below:
\[
V_{E}(M{\da G{E}}) = (\res_{G,E}^*)^{-1}\, V_G(M) \subseteq 
(\res_{G,E}^*)^{-1}\, V_G(N) = V_{E}(N{\da GE})
\]
The equalities are by the Subgroup Restriction Theorem~\cite[Theorem 9.6.2]{CTVZ}. Thus, keeping in mind Remark~\ref{rem:Evariety}, one gets from Theorem~\ref{thm:hopkins-ci} that $M{\da GE}$ is in $\thick_{E}(N{\da GE})$, and hence, by Frobenius reciprocity, also that  
\[
M \otimes k{\ua EG}\ \in \ \thick_{G}(N \otimes k{\ua EG})\subseteq \thick_{G}^{\otimes}(N)\,.
\]
Let now $E_1, \dots, E_t$ the elementary abelian $p$-subgroups of $G$ from Theorem~\ref{thm:filter}. The inclusion above then yields the second inclusion below:
\[
\thick_{G}(M)\subseteq \ \thick_{G}(\bigoplus_{i=1}^{t}(M \otimes k{\ua{E_{i}}G}))\subseteq \thick_{G}^{\otimes}(N)\,.
\]
while the first one is by Corollary \ref{gens-kg}. This completes the proof of the theorem.
\end{proof}

For any specialization closed subset of $V$ of $V_G(k)$, we write $\Db(kG)_{V}$ for the subcategory consisting of all $M$ in $\Db(kG)$ such that $V_G(M)\subseteq V$. Using the properties of support from \cite[Proposition 9.7.2]{CTVZ}, it is not difficult to see that $\Db(kG)_{V}$ is a thick tensor ideal subcategory of $\Db(kG)$. Arguing as in the proof of Corollary~\ref{cor:thick-cdga}, and using Theorem~\ref{thm:hopkins-kg} instead of Theorem~\ref{thm:hopkins-dga}, one get the following result.

\begin{cor} 
\label{cor:thick-db-kg}
If $\CatC$ is a tensor ideal thick subcategory of $\Db(kG)$, then there exist a specialization closed subset $V$ of $V_G(k)$ such that $\CatC = \Db(kG)_{V}$. \qed
\end{cor}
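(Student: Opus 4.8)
The plan is to mimic the proof of Corollary~\ref{cor:thick-cdga}, replacing Theorem~\ref{thm:hopkins-dga} with its group-algebra counterpart Theorem~\ref{thm:hopkins-kg}. Let $\CatC$ be a tensor ideal thick subcategory of $\Db(kG)$ and set
\[
V=\bigcup_{M\in\CatC}V_{G}(M)\subseteq V_{G}(k)\,.
\]
Since each $V_{G}(M)$ is a closed subset of $V_{G}(k)=\spec\,\HHH^{\bullet}(G)$, this union is specialization closed, and it is immediate from definitions that $\CatC\subseteq\Db(kG)_{V}$. The content is the reverse inclusion, which requires Theorem~\ref{thm:hopkins-kg}.

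So let $M\in\Db(kG)_{V}$, that is, $V_{G}(M)\subseteq V$. First I would handle the fact that $V$ need not itself be the support of a single object in $\CatC$: since $V_{G}(M)$ is a closed subset and $V$ is a union of the closed sets $V_{G}(N)$ with $N\in\CatC$, by noetherianity of $\HHH^{\bullet}(G)$ (equivalently, quasi-compactness of the closed set $V_{G}(M)$) there exist finitely many $N_{1},\dots,N_{s}$ in $\CatC$ with $V_{G}(M)\subseteq\bigcup_{i}V_{G}(N_{i})$. Set $N=\bigoplus_{i}N_{i}$; then $N\in\CatC$ and, by the additivity of support over finite direct sums (\cite[Proposition 9.7.2]{CTVZ}), $V_{G}(N)=\bigcup_{i}V_{G}(N_{i})\supseteq V_{G}(M)$. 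Now Theorem~\ref{thm:hopkins-kg} applies to $M$ and $N$ to give $M\in\thick_{G}^{\otimes}(N)$. Since $N$ lies in the tensor ideal thick subcategory $\CatC$, we have $\thick_{G}^{\otimes}(N)\subseteq\CatC$, whence $M\in\CatC$. This proves $\Db(kG)_{V}\subseteq\CatC$, and combined with the first inclusion gives $\CatC=\Db(kG)_{V}$.

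The only genuinely non-formal input is Theorem~\ref{thm:hopkins-kg}, which is already proved; everything else is bookkeeping with the properties of $V_{G}(-)$ recalled just before the corollary (behaviour under direct sums, closedness of supports) together with the hypothesis that $\CatC$ is \emph{tensor ideal}, which is exactly what licenses the step $\thick_{G}^{\otimes}(N)\subseteq\CatC$. The mildest obstacle is the reduction from the possibly-infinite union $V$ to a finite subfamily; this is the standard quasi-compactness argument, and it is the reason the statement only asserts that $V$ is specialization closed rather than closed. No infinite constructions enter, in keeping with the theme of the paper.
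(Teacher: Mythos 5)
Your proposal is correct and follows precisely the route the paper indicates: the paper states the result with the instruction to argue as in Corollary~\ref{cor:thick-cdga}, substituting Theorem~\ref{thm:hopkins-kg} for Theorem~\ref{thm:hopkins-dga}, which is exactly what you do. You also correctly flag the one new ingredient, namely that the tensor-ideal hypothesis on $\CatC$ is what licenses $\thick_{G}^{\otimes}(N)\subseteq\CatC$.
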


Now, for any closed subset $V\subseteq V_{G}(k)$, one can construct an object $M\in\Db(kG)$ with $V_{G}(M)=V$, either using either the tensor product theorem from \cite[Theorem 9.6.4]{CTVZ}, or the more elementary approach from \cite[\S5]{AI2}. Combined with Corollary~\ref{cor:thick-db-kg} this gives a bijection between tensor ideal thick subcategories of $\Db(kG)$ and specialization closed subsets of $V_{G}(k)$.

There is an equivalence $\Db(kG)/\thick(kG) \equiv \stmod(kG)$ of triangulated categories. From this and the corollary above, it is easy to deduce the characterization of the tensor ideal thick subcategories of $\stmod(kG)$ proved in \cite{BCR}.


\begin{thebibliography}{BeCa}
%
%\bibitem{AAH}
%A.~Aramova,  L.~L.~Avramov and J.~Herzog, \textit{Resolutions of monomial ideals and cohomology over exterior algebras}, Trans. Amer. Math. Soc. \textbf{352} (2000),  579\--594.
 
\bibitem{Av} 
L. L. Avramov, \textit{Modules of finite virtual projective dimension}, Invent. Math. \textbf{96} (1989), 71\--101.

%\bibitem{AB}
%L. L. Avramov, R.-O. Buchweitz, \textit{Support varieties and cohomology over complete intersections}, Invent. Math. \textbf{142} (2000), 285\--318.
 
\bibitem{ABIM} L. L. Avramov, R.-O. Buchweitz, S. B. Iyengar and C. Miller, \textit{Homology of perfect complexes}, Advances in Math. \textbf{223} (2010), 1731\--1781. \texttt{Corrigendum}: Advances in Math. \textbf{225} (2010), 3576\--3578.

\bibitem{AI} 
L. L. Avramov and S. B. Iyengar, \textit{Cohomology over complete intersections via exterior algebras}, Triangulated categories (Leeds, 2006), London Math. Soc. Lecture Note Ser. \textbf{375},  Cambridge Univ. Press, Cambridge, 2010, 52\--75.

\bibitem{AI2} 
L. L. Avramov and S. B. Iyengar, \textit{Constructing modules with prescribed cohomological support}, Ill. Jour. Math. \textbf{51} (2007), 1\--20.


\bibitem{AJ} 
L. L. Avramov and D. A. Jorgensen, \textit{Reverse homological algebra over some local rings}, in preparation.

%\bibitem{Benson}
%D.~J. Benson, \textit{{Representations and Cohomology II: Cohomology of groups and modules}}, Cambridge Studies in
%Advanced Mathematics, vol.~31, Cambridge University Press, 1991, reprinted in paperback, 1998.

\bibitem{BCR} 
D. Benson, J. Carlson and J. Rickard, \textit{Thick subcategories of the stable category}, Fund. Math. \textbf{153} (1997), 59\--80.

\bibitem{BIK2} 
D.~J. Benson, S.~B. Iyengar, and H.~Krause, \textit{Stratifying triangulated categories},
J.~Topology \textbf{4} (2011), 641\--666.

\bibitem{BIK3} 
D.~J. Benson, S.~B. Iyengar, and H.~Krause, \textit{Stratifying modular representations of finite groups}, Annals of Math \textbf{174} (2011), 1643\--1684.

\bibitem{Be}
P.~Bergh, \textit{On support varieties of modules over complete intersections}, Proc. Amer. Math. Soc., \textbf{135} (2007), 3795\--3803. 
      
\bibitem{Bruns/Herzog:1998} W.~Bruns, J.~Herzog, \textit{Cohen-Macaulay rings}, revised ed., Studies in Advanced Mathematics \textbf{39}, University Press, Cambridge, 1998.

\bibitem{CTVZ}
J. F. Carlson, L. Townsley, L. Valeri-Elizondo, M. Zhang, \textit{Cohomology rings of finite groups. With an appendix: Calculations of cohomology rings of groups of order dividing 64}, Algebras and Applications, \textbf{3}. Kluwer Academic Publishers, Dordrecht, 2003.
 
\bibitem{C}
J. F. Carlson, \textit{Cohomology and induction from elementary abelian subgroups}, Quart. J. Math. \textbf{51} (2000), 169\--181.
%
%\bibitem{C2} 
%J. F. Carlson, \textit{The variety of an indecomposable module is connected}, Invent. Math., \textbf{77} (1984), 291\--299.


\bibitem{FP} 
E. Friedlander and J. Pevtsova, \textit{$\Pi$-supports for modules for finite group schemes over a field}, Duke Math. J., \textbf{139} (2007), no. 2, 317\--368.

%\bibitem{DGI2}
%W. Dwyer, J. P. C. Greenlees and S. Iyengar, \textit{Finiteness in derived categories of local rings}, Comment. Math. Helv. \textbf{81} (2006), 383\--432.
%
%\bibitem{Ei}
%D. Eisenbud, \textit{Homological algebra on a complete intersection, with an application to group representations},
%Trans. Amer. Math. Soc. \textbf{260} (1980), 35\--64.

\bibitem{Gu}
T. H. Gulliksen, \textit{A change of ring theorem with applications to Poincar\'e series and intersection multiplicity}, Math. Scand. \textbf{34} (1974), 167\--183.


\bibitem{Hap}
D. Happel, \textit{Triangulated Categories in the Representation Theory of Finite-Dimensional Algebras},  Cambridge University Press, Cambridge, 1988.

\bibitem{Hopkins:1987}
M.~J. Hopkins, \textit{{Global methods in homotopy theory}}, Homotopy Theory, Durham 1985, Lecture Notes in Mathematics, vol. 117, Cambridge University Press, 1987.

\bibitem{Hovey/Palmieri/Strickland:1997}
M.~Hovey, J.~H. Palmieri, and N.~P. Strickland, \textit{{Axiomatic stable homotopy theory}}, Mem.\ AMS, vol. 128, American Math.\ Society, 1997.

\bibitem{Iy} S.~B.~Iyengar, \emph{Thick subcategories and localizing subcategories of derived categories}, lectures at workshop ``Algebraic triangulated categories and related topics'', RIMS, Kyoto,
July 2009. \texttt{http://www.mi.s.osakafu-u.ac.jp/~kiriko/seminar/09JulRIMS.html}

\bibitem{Neeman:1992}
A.~Neeman, \textit{The chromatic tower for $D(R)$}, Topology \textbf{31}  (1992), 519\--532.

\bibitem{Sj}
G.~Sj\"odin, \textit{A set of generators for $\Ext_{R}(k,k)$}, Math. Scand. \textbf{38} (1976), 1\--12.        

\bibitem{St}
G.~Stevenson, \textit{Subcategories of singularity categories via tensor actions}, 
preprint 2011; \texttt{arXiv:1105.4698}

\bibitem{Ta}
J.~Tate, \textit{Homology of noetherian rings and local rings}, Illinois J. Math. \textbf{1} (1957), 14\--25.

\bibitem{Thomason:1997}
R.~Thomason, \textit{The classification of triangulated subcategories}, Compositio Math. \textbf{105} (1997), 1\--27.
\end{thebibliography}
\end{document}